\begin{document}
\numberwithin{equation}{section}

\def\1#1{\overline{#1}}
\def\2#1{\widetilde{#1}}
\def\3#1{\widehat{#1}}
\def\4#1{\mathbb{#1}}
\def\5#1{\frak{#1}}
\def\6#1{{\mathcal{#1}}}
\def\7#1{{\bf{#1}}}

\def\C{{\4C}}
\def\R{{\4R}}
\def\N{{\4N}}
\def\Z{{\4Z}}
\def\P{{\4P}}
\def\Q{{\4Q}}

\title[Multi-resonant biholomorphisms]{Dynamics of multi-resonant biholomorphisms}
\author[F. Bracci]{Filippo Bracci*}
\address{F. Bracci: Dipartimento Di Matematica, Universit\`{a} Di Roma \textquotedblleft Tor
Vergata\textquotedblright, Via Della Ricerca Scientifica 1,
00133, Roma, Italy. } \email{fbracci@mat.uniroma2.it}
\author[J. Raissy]{Jasmin Raissy**}
\address{J. Raissy: Dipartimento Di Matematica e Applicazioni, Universit\`{a} degli Studi di Milano Bi\-coc\-ca, Via Roberto Cozzi 53,
20125, Milano, Italy. } \email{jasmin.raissy@unimib.it}
\author[D. Zaitsev]{Dmitri Zaitsev***}
\address{D. Zaitsev: School of Mathematics, Trinity College Dublin, Dublin 2, Ireland}
\email{zaitsev@maths.tcd.ie}
%\subjclass{}
\thanks{$^{*}$Partially supported by the ERC grant ``HEVO - Holomorphic Evolution Equations'' n. 277691.}
\thanks{$^{**}$Partially supported by FSE, Regione Lombardia, and by the PRIN2009 grant ``Critical Point Theory and Perturbative Methods for Nonlinear Differential Equations''.}
\thanks{$^{***}$Partially supported by the Science Foundation Ireland grant 06/RFP/MAT018.}

%\tableofcontents

\def\Label#1{\label{#1}{\bf (#1)}~}
%\def\Label#1{\label{#1}}

% Standard sets

\def\cn{{\C^n}}
\def\cnn{{\C^{n'}}}
\def\ocn{\2{\C^n}}
\def\ocnn{\2{\C^{n'}}}

% Abbreviations

\let\no=\noindent
\let\bi=\bigskip
\let\me=\medskip
\let\sm=\smallskip
\let\ce=\centerline
\let\ri=\rightline
\let\te=\textstyle
\let\gd=\goodbreak
\let\io=\infty
\def\qqquad{\quad\qquad}

\def\dist{{\rm dist}}
\def\const{{\rm const}}
\def\rk{{\rm rank\,}}
\def\id{{\sf id}}
\def\aut{{\sf aut}}
\def\Aut{{\sf Aut}}
\def\CR{{\rm CR}}
\def\GL{{\sf GL}}
\def\Re{{\sf Re}\,}
\def\Im{{\sf Im}\,}
\def\span{\text{\rm span}}
\def\res{{\rm Res}\,}

\def\codim{{\rm codim}}
\def\crd{\dim_{{\rm CR}}}
\def\crc{{\rm codim_{CR}}}

\def\phi{\varphi}
\def\eps{\epsilon}
\def\d{\partial}
\def\a{\alpha}
\def\b{\beta}
\def\g{\gamma}
\def\G{\Gamma}
\def\D{\Delta}
\def\Om{\Omega}
\def\k{\kappa}
\def\l{\lambda}
\def\L{\Lambda}
\def\z{{\bar z}}
\def\w{{\bar w}}
\def\t{\tau}
\def\th{\theta}
\def\ta{\tilde{\alpha}}
%************************** SIDE-REMARKS *****************************
\def\sideremark#1{\ifvmode\leavevmode\fi\vadjust{%            The remark
\vbox to0pt{\hbox to 0pt{\hskip\hsize\hskip1em%               will appear only
\vbox{\hsize1.5cm\tiny\raggedright\pretolerance10000%          on the side
\noindent #1\hfill}\hss}\vbox to8pt{\vfil}\vss}}}%           in 3cm
%                                                            wide box

\def\Dif{{\sf Diff}(\C^n;0)}
\def\Diff{{\sf Diff}}

\emergencystretch15pt \frenchspacing

\newtheorem{theorem}{Theorem}[section]
\newtheorem{lemma}[theorem]{Lemma}
\newtheorem{proposition}[theorem]{Proposition}
\newtheorem{corollary}[theorem]{Corollary}

\theoremstyle{definition}
\newtheorem{definition}[theorem]{Definition}
\newtheorem{example}[theorem]{Example}

\theoremstyle{remark}
\newtheorem{remark}[theorem]{Remark}
\numberwithin{equation}{section}

\def\bl{\begin{lemma}}
\def\el{\end{lemma}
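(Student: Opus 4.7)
The plan is to establish the claimed dynamical structure for a multi-resonant biholomorphism $F$ via a three-stage strategy: formal construction of an invariant variety, reduction to a low-dimensional parabolic model, and analytic convergence on a sectorial domain.

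I would start by extracting from the multi-resonance data the distinguished resonant direction(s) along which $F$ degenerates to tangent-to-identity behavior. A finite sequence of polynomial changes of coordinates suffices to simplify the lowest-order resonant interactions, without attempting any full normalization; the goal is only to expose the leading resonant vector field that will drive the attracting dynamics. I would then seek an invariant variety parameterized by a map $\phi(t)$, with $t$ the resonant parameter, satisfying a conjugation equation of the form $F\circ\phi = \phi\circ g$ for a one- or low-dimensional germ of tangent-to-identity map $g$ to be determined. Substituting power series and invoking the non-degeneracy hypothesis built into the statement, the coefficients of $\phi$ and $g$ are determined recursively by a triangular linear system; no small divisors appear, because of the parabolic character of the reduced dynamics, so the formal solution exists uniquely once $g$ is placed in a normal form.

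The genuinely hard step is showing that this formal $\phi$ is holomorphic on a bona fide sectorial domain, and that its image lies in the basin of attraction of $0$ for $F$. My approach would be to write $\phi=\phi_N+\psi$, where $\phi_N$ is a polynomial truncation of high enough order, substitute into the conjugation equation, and recast the problem as a fixed-point equation for $\psi$ on a Banach space of holomorphic functions on an attracting petal of $g$. The essential analytic input is the Leau--Fatou description of iterates of $g$ on its petal, combined with a careful accounting of the order of vanishing of the residual operator produced by the truncation.

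The main obstacle I anticipate is the simultaneous calibration of petal size, truncation order $N$, and the weight in the Banach norm so that the resulting operator is a contraction. This is where the multi-resonant situation genuinely complicates the picture: the reduced dynamics $g$ lives on a space whose dimension equals the number of independent resonances, and its attracting region is no longer a single Leau petal but the attracting basin of a higher-dimensional degenerate parabolic germ. I would expect to need a precise geometric description of that basin --- along the lines of Hakim's construction for semi-attracting germs, adapted to the multi-resonant model --- before the fixed-point argument can be closed. Once $\psi$ is produced, the attraction of the image of $\phi$ to the origin under forward iteration of $F$ follows immediately from the corresponding attraction under the reduced map $g$.
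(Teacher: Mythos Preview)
Your proposal does not address the statement in question. The only lemma in the paper (Lemma~\ref{diff}) asserts a purely combinatorial fact: if $F$ is $m$-resonant with respect to $\lambda_1,\ldots,\lambda_r$, then $\lambda_j\ne\lambda_s$ whenever $1\le j\le r$, $1\le s\le n$, and $j\ne s$. There is no dynamics here, no invariant variety, no petal, and no Banach fixed-point argument. What you have sketched is a plausible outline for attacking the main existence theorem of the paper (the construction of basins of attraction), not this lemma.

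The paper's proof of the lemma is a four-line contradiction argument. Suppose $\lambda_j=\lambda_s$ with $j\ne s$. For any $P$ in the semigroup $\Sigma$ generated by $P^1,\ldots,P^m$, the resonance $\lambda_j=\lambda^P\lambda_j$ becomes $\lambda_j=\lambda^P\lambda_s$, forcing $P+e_s-e_j\in\Sigma$ by the defining property of $m$-resonance. Iterating gives $\Sigma+\N(e_s-e_j)\subset\Sigma$, but for any fixed $P\in\Sigma$ one can choose $k$ large enough that $P+k(e_s-e_j)$ has a negative entry, hence lies outside $\N^n$ and a fortiori outside $\Sigma$. That is the entire argument.

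Your proposal contains none of this; the formal-series/sectorial-analysis machinery you describe is simply irrelevant to the claim. If you meant to prove the main theorem instead, note that the paper does \emph{not} proceed via an invariant-variety conjugation $F\circ\phi=\phi\circ g$ solved by a Banach fixed point; rather, it works directly with the projection $\pi(z)=(z^{P^1},\ldots,z^{P^m})$, tracks the induced map in Hakim's blown-up coordinates $(U,\hat y)$, and verifies by explicit estimates that a candidate domain $\widetilde B=\{|z_j|<|x|^\beta,\ \pi(z)\in B^1_{R,C}(\varepsilon)\}$ is forward-invariant. Either way, you should first identify which statement you are actually proving.
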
}
\def\br{\begin{remark}}
\def\er{\end{remark}}
\def\bp{\begin{Pro}}
\def\ep{\end{Pro}}
\def\bt{\begin{Thm}}
\def\et{\end{Thm}}
\def\bc{\begin{Cor}}
\def\ec{\end{Cor}}
\def\bd{\begin{Def}}
\def\ed{\end{Def}}
\def\be{\begin{Exa}}
\def\ee{\end{Exa}}
\def\bpf{\begin{proof}}
\def\epf{\end{proof}}
\def\ben{\begin{enumerate}}
\def\een{\end{enumerate}}
\def\beq{\begin{equation}}
\def\eeq{\end{equation}}

\begin{abstract}
The goal of this paper is to study the dynamics of holomorphic
diffeomorphisms in $\C^n$ such that the resonances among the first $1\le r\le n$ eigenvalues of the
differential are generated over $\N$ by a finite number of $\Q$-linearly independent multi-indices (and more resonances are
allowed for other eigenvalues). We give sharp conditions
for the existence of basins of attraction where a Fatou coordinate can be defined.
Furthermore, we obtain a generalization of the Leau-Fatou flower theorem, providing a complete description of the dynamics in a full neighborhood of the origin for $1$-resonant parabolically attracting holomorphic germs in Poincar\'e-Dulac normal form.
\end{abstract}

\maketitle

\section{Introduction}

The goal of this paper is to put in a single frame both results by the first and the third author \cite{BZ} and by Hakim \cite{Ha, Ha1} on the existence of basins of attraction for germs of biholomorphisms of $\C^n$.
In \cite{BZ} basins of attraction were constructed, modeled on the Leau-Fatou flower theorem, for germs with {\em one-dimensional sets of resonances}; on the other hand, in \cite{Ha, Ha1} the basins were constructed for germs tangent to the identity, in which case the set of resonances is the set of all multi-indices.
In the present paper we deal with the case when the eigenvalues of the linear part have resonances generated by a finite number of multi-indices.
Whereas considerable research has been done for {\em one-dimensional sets of resonances}, much less is known when resonances have {\em several generators}, which is the subject of our study.

Let $F$ be a germ of biholomorphism of $\C^n$,
fixing the origin $0$, and whose differential $dF_0$ is diagonalizable with eigenvalues $\{\l_1,\ldots, \l_n\}$,
 %It is well-known that presence of resonances among the eigenvalues $\{\l_1,\ldots, \l_n\}$ of $dF_0$ prevents linearization of the germ, and hence makes the understanding of the dynamics of $F$ near $0$ very complicated (see \cite[Ch. IV]{Ar}, \cite{Ab2}).
%One of the main conclusions of \cite{BZ} establishes the existence of basins of attraction for germs with one-dimensional family of resonances, whereas
%When resonances exist, the dynamics can be rather different from what one would expect looking at the linear part of the germ. For instance, as shown in the recent paper \cite{BZ}, even when all eigenvalues have modulus one, but are not roots of unity, a parabolic-like dynamics might appear, and parabolic basins of attraction might show up.
%
%The case when the  resonances are generated by only one multi-index (the so-called $1$-resonance case) has been studied in \cite{BZ}, where it is proved that under a non-degeneracy condition and a parabolic-type attracting condition there exist basins of attraction modeled on a one-dimensional Leau-Fatou flower theorem.
and denote by $\Dif$ the space of germs of biholomorphisms of $\C^n$ fixing $0$. Given $F$, we shall say that it is {\sl $m$-resonant with respect to the first $r$ eigenvalues $\{\l_1,\ldots, \l_r\}$} if there exist $m$ linearly independent multi-indices $P^1,\ldots, P^m\in \N^r\times\{0\}^{n-r}$, such that all resonances with respect to the first $r$ eigenvalues, {\sl i.e.}, the relations $\l_s=\prod_{j=1}^n \l_j^{\beta_j}$ for $1\leq s\leq r$, are precisely of the form $(\beta_1,\ldots, \beta_n)=\sum_{t=1}^m k_t P^t+e_s$ with $k_t\in \N$, $e_s=(0,\ldots, 0,1,0\ldots, 0)$ with $1$ in the $s$-th coordinate (see \cite{Ra} for a detailed study of the general structure of resonances). Here and in the following we shall adopt the
notation $\N=\{0,1,\ldots\}$.

The classical Poincar\'e-Dulac theory (see \cite[Chapter IV]{Ar}, \cite{Ab2})  implies that $F$ is formally conjugated to a map $G=(G_1,\ldots, G_n)$ with
\[
G_j(z)=\l_j z_j + \sum_{| K {\bf P}|\ge 1 \atop K\in\N^m} a_{K,j} z^{K {\bf P}} z_j, \quad j=1,\ldots, r,
\]
where $K{\bf P}:=\sum_{t=1}^m k_t P^t$. Let $k_0$ be the {\sl weighted order} of $F$, {\sl i.e.,} the minimal $|K|$ such that $a_{K,j}\neq 0$ for some $1\leq j\leq r$. Such a number is a holomorphic invariant of $F$.

In the study of the dynamics of $F$, an important r\^ole is played by the map $\pi: \C^n \to \C^m$ given, in multi-indices notation, by $z\mapsto (z^{P^1},\ldots, z^{P^m})$. In fact, to $G$ as above there is a canonically associated $\Phi$ formal biholomorphism of $(\C^m, 0)$  such that $\pi \circ G=\Phi \circ \pi$. If $k_0<+\infty$, then $\Phi$ is of the form
\[
\Phi(u) := u + H_{k_0+1}(u) + O(\|u\|^{k_0+2}),
\]
where $H_{k_0+1}$ is a non-zero homogeneous polynomial of degree $k_0+1$.
The map $\Phi$ depends on the formal normal form $G$, but it is an invariant of $F$ up to conjugation. Its truncation
$$
f(u):=u+H_{k_0+1}(u)
$$
is called a {\sl parabolic shadow} of $F$, because, since $F$ is  holomorphically conjugated to $G$ up to any fixed order,  the foliation $\{z\in \C^n: \pi(z)=\hbox{const}\}$ is  invariant under $G$ up to the same fixed order, and the action---of parabolic type---induced by $G$ near $0$ on the leaf space of such a foliation is given by $f$ up to order $k_0+2$.

One can then suspect  that, if $f$ has a basin of attraction and some other ``attracting'' conditions controlling the dynamics of $F$ on the fibers of $\pi\colon\C^n\to \C^m$ are satisfied, then $F$ must have a basin of attraction with boundary at $0$. This is exactly the case, and it is our main result. Let $v\in \C^m\setminus\{0\}$. We say that a $F$, $m$-resonant with respect to the first $r$ eigenvalues, is {\sl $(f,v)$-attracting-non-degenerate} if $v$ is a non-degenerate characteristic direction for $f$ in the sense of Hakim \cite{Ha}, {\sl i.e.,} $H_{k_0+1}(v)=c v$ for some $c\ne0$, such that all directors of $v$ have  strictly positive real part (see Section \ref{preli} for precise definitions). Although $f$ is not uniquely determined by $F$, we will show that if $F$ is $(f,v)$-attracting-non-degenerate with respect to some parabolic shadow $f$ and some vector $v$, then for any other parabolic shadow $\2f$ of $F$ there exists a vector $\2v$ such that $F$ is $(\2f,\2v)$-attracting-non-degenerate. By Hakim's theory \cite{Ha}, \cite{Ha1} (see also \cite{Ari}), if $F$ is $(f,v)$-attracting-non-degenerate then the map $f$ admits a basin of attraction with $0$ on the boundary, and with all orbits tending to $0$ tangent to the direction $[v]$.

If $F$ is $(f,v)$-attracting-non-degenerate, we say that $F$ is {\sl $(f,v)$-parabolically attracting with respect to $\{\l_1,\ldots, \l_r\}$} if $H_{k_0+1}(v)= -(1/k_0) v$ (which can be obtained by scaling $v$) and
\[
\Re\Bigg(\sum_{|K|=k_0\atop K\in\N^m} {\frac{a_{K, j}}{\l_j}} v^{K}\Bigg) <0 \quad j=1,\ldots, r.
\]
Such a condition is invariant in the sense that if $F$ is $(f,v)$-parabolically attracting with respect to some parabolic shadow $f$ and some vector $v$, then for any other parabolic shadow $\2f$ of $F$ there exists a vector $\2v$ such that $F$ is $(\2f,\2v)$-parabolically attracting. We simply say that $F$ is {\sl parabolically attracting} if it is $(f,v)$-parabolically attracting with respect to some parabolic shadow $f$ and some vector $v$. Our main theorem is the following:

\begin{theorem}\label{mainintro}
Let $F\in\Dif$ be $m$-resonant with respect to the  eigenvalues
$\{\l_1,\ldots\l_r\}$ and of weighted order $k_0$. Assume that
$|\l_j|=1$ for $j=1, \dots, r$ and $|\l_j|<1$ for
$j=r+1,\ldots, n$. If $F$ is parabolically attracting, then
there exist (at least) $k_0$ disjoint basins of attraction
having $0$ at the boundary.

Moreover, for each basin of attraction $B$ there exists a holomorphic map $\psi: B \to \C$ such that for all $z\in B$
\[
\psi\circ F(z)=\psi(z)+1.
\]
\end{theorem}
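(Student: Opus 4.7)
The plan is to use the projection $\pi\colon\C^n\to\C^m$ to reduce the problem to the parabolic dynamics of $\Phi$ on $\C^m$, apply Hakim's theorem there to produce the required $k_0$ basins on the base, and lift them back to $\C^n$ by exploiting the parabolically attracting hypothesis to control the fiber directions. As a preliminary step I would apply Poincar\'e--Dulac up to some large order $N$ (to be fixed later by the Fatou-coordinate estimate) so as to write $F=G+R$, where $G$ is in resonant normal form and $R$ vanishes at $0$ to order $N$. The identities $\lambda^{K{\bf P}}=1$, forced by the resonance relations $\lambda_s=\lambda^{K{\bf P}+e_s}$, give the exact semiconjugacy $\pi\circ G=\Phi\circ\pi$ with $\Phi(u)=u+H_{k_0+1}(u)+O(\|u\|^{k_0+2})$ and $H_{k_0+1}(v)=-v/k_0$. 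Since $H_{k_0+1}(\zeta v)=-\zeta v/k_0$ for every $k_0$-th root of unity $\zeta$, and the directors at $\zeta v$ coincide with those at $v$, Hakim's theorem \cite{Ha,Ha1} yields, for $\ell=0,\dots,k_0-1$, pairwise disjoint basins $B_0^{(\ell)}\subset\C^m$ for $\Phi$ tangent to $[\zeta^\ell v]$, each equipped with a Fatou coordinate $\chi_\ell\colon B_0^{(\ell)}\to\C$ satisfying $\chi_\ell\circ\Phi=\chi_\ell+1$.

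The core step is then to construct, for each $\ell$, an open set $B^{(\ell)}\subset\C^n$ whose $F$-orbits $z_k:=F^k(z)$ converge to $0$ with $\pi(z_k)\sim -\zeta^\ell v/(kk_0)^{1/k_0}$. Inserting this asymptotics into the normal form $G_j(z)=\lambda_j z_j(1+\sum_K a_{K,j}z^{K{\bf P}})$, for $j\le r$ the leading variation of $\log|z_{k,j}|$ is governed by $k^{-1}\,\Re\bigl(\sum_{|K|=k_0}a_{K,j}\lambda_j^{-1}v^K\bigr)$, and the parabolically attracting hypothesis is exactly what guarantees strictly positive decay rates $\sigma_j>0$, so that $|z_{k,j}|\to 0$. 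For $j>r$ the condition $|\lambda_j|<1$ produces direct exponential contraction, while the $r-m$ directions lying in $\ker d\pi|_{\C^r}$ (carrying unit-modulus eigenvalues and no linear damping) must be controlled purely through the nonlinear parabolic mechanism above. Following Hakim's strategy I would perform a projective blow-up along $[\zeta^\ell v]$, set up a contraction/majorant fixed-point problem on a space of sequences approximating the desired asymptotics, and solve it for $F$ (the remainder $R$ being a harmless perturbation once $N$ is large), extracting an $(n-m)$-parameter family of admissible orbits that fills the open basin $B^{(\ell)}$.

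Finally, the Fatou coordinate on $B^{(\ell)}$ is obtained by pulling $\chi_\ell$ back along $\pi$ and correcting for the failure of the equality $\pi\circ F=\Phi\circ\pi$ (only $\pi\circ G=\Phi\circ\pi$ is exact):
\[
\psi(z):=\lim_{k\to\infty}\bigl(\chi_\ell(\pi(F^k(z)))-k\bigr).
\]
The telescoping series converges because $F-G=O(\|z\|^N)$ with $N$ chosen sufficiently large, while $\|z_k\|$ decays only at the polynomial rate $k^{-1/k_0}$, so the one-step discrepancy $\chi_\ell(\pi(z_{k+1}))-\chi_\ell(\pi(z_k))-1$ is summable along every orbit; by construction $\psi\circ F=\psi+1$ on $B^{(\ell)}$. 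The main obstacle, as indicated, is the lifting step: the $r-m$ unit-modulus directions in $\ker d\pi|_{\C^r}$ admit no linear contraction, so the entire damping mechanism in those directions is supplied by the nonlinear resonant terms, and making this quantitative and uniform across the basin is the heart of the argument and requires a careful adaptation of Hakim's projective/contraction scheme to the $m$-resonant setting.
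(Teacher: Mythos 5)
Your overall architecture is the same as the paper's: normalize by Poincar\'e--Dulac to a finite order, project by $\pi(z)=(z^{P^1},\dots,z^{P^m})$, invoke Hakim's theory for the induced parabolic map on $\C^m$, use the parabolically attracting inequality to damp the $r$ neutral coordinates, and obtain the Fatou coordinate by a telescoping limit whose convergence rests on the high order of the remainder versus the polynomial decay along orbits. But the statement you yourself flag as ``the heart of the argument'' --- producing an \emph{open} set $B^{(\ell)}\subset\C^n$ that is $F$-invariant and attracted to $0$ --- is exactly the step you do not carry out, and the mechanism you sketch for it is off target. You propose a contraction/majorant fixed-point problem on a space of sequences, extracting an ``$(n-m)$-parameter family of admissible orbits that fills the open basin''; a family of orbits of that dimension cannot fill an open subset of $\C^n$, and more to the point no stable-manifold-type fixed-point argument is needed here. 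The paper instead \emph{defines} the basin explicitly as a polynomially pinched fiber region over a subsector of Hakim's domain,
\[
\widetilde B=\{z:\ |z_j|<|x|^{\beta}\ (j=1,\dots,n),\ \ \pi(z)\in B^1_{R,C}(\eps)\},
\]
where $x$ is the coordinate of $u=\pi(z)$ along $v$, and verifies $F(\widetilde B)\subset\widetilde B$ by direct one-step estimates: the restriction to the small sector $S_R(\eps)$ is what converts the pointwise hypothesis $\Re\bigl(\sum_{|K|=k_0}a_{K,j}\lambda_j^{-1}v^K\bigr)<0$ into the uniform bound $|1+\sum_{|K|=k_0}a_{K,j}\lambda_j^{-1}u^K|\le 1-2c|x|^{k_0}$ on the whole region, and the parameters must satisfy concrete compatibility constraints (e.g.\ $\beta(\delta+1)<c'k_0$ and $\beta l>k_0+1$) so that $|F_j(z)|<|x_1|^{\beta}$ for $j\le r$, $|F_j(z)|<|x_1|^{\beta}$ for $j>r$, and $\pi(F(z))$ stays in the sector region. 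Your heuristic computation of the decay rate of $\log|z_{k,j}|$ presupposes the asymptotics $\pi(z_k)\sim -\zeta^{\ell}v\,(k k_0)^{-1/k_0}$, i.e.\ it assumes the conclusion unless it is organized as an invariant-region argument of this kind; so as written the proposal has a genuine gap precisely at the lifting step.

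Two smaller points. First, the $k_0$ basins do not come from $k_0$ distinct projective directions: $[\zeta^{\ell}v]=[v]$ for every $k_0$-th root of unity $\zeta$, and the $k_0$ domains are the components $\Pi^i_R$ of $\Delta_R$ attached to the single fully attractive direction $[v]$ (this is how Theorem \ref{Ha} is stated and used). Second, your Fatou coordinate $\psi(z)=\lim_k\bigl(\chi_\ell(\pi(F^k(z)))-k\bigr)$ needs the projected $F$-orbit to remain in the domain of Hakim's coordinate $\chi_\ell$ and needs the blow-up of $d\chi_\ell$ near $0$ (of order $|u|^{-(k_0+1)}$) to be beaten by the discrepancy $\pi\circ F-\Phi\circ\pi=O(\|z\|^{l+1})=O(|x|^{\beta(l+1)})$; both requirements are again supplied only by the quantitative basin construction ($\beta$, $l$, $\eps$, $R$ chosen compatibly), which is where the paper proves them, so the Fatou-coordinate part is salvageable but currently rests on the missing step.
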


Such a theorem on one side generalizes the corresponding result in \cite{BZ} for $1$-resonant germs with respect to the first $r$ eigenvalues, and on the other side shows the existence of a {\sl Fatou coordinate} for the germ on each basin of attraction. In \cite{BZ}  it is  shown that the non-degeneracy and parabolically attracting hypotheses are sharp for the property of having a basin of attraction.

Next we consider the case when $F$ is $m$-resonant with respect to \emph{all} the eigenvalues $\{\l_1,\ldots, \l_n\}$ and $|\l_j|=1$ for all $j=1,\ldots, n$. We show that, in this case, if $F$ is attracting-non-degenerate or parabolically attracting then so is $F^{-1}$. This allows us to show the existence of repelling basins for $F$, giving a generalization of the Leau-Fatou flower theorem in such a case. In fact, under these hypotheses, and in the $1$-resonant case, we prove that a germ $F$ which is holomorphically conjugated to a Poincar\'e-Dulac normal form admits a full punctured neighborhood of $0$ made of attracting and repelling basins for $F$ plus invariant hypersurfaces where the map is linearizable (see Theorem \ref{L-Fatou} for the precise statement).

\me

The plan of the paper is the following. In Section \ref{preli} we briefly recall the known results for the dynamics of maps tangent to the identity that we shall use to prove our main result. In Section \ref{3} we define $m$-resonant germs with respect to the first $r$ eigenvalues and study their basic properties. The proof of Theorem \ref{mainintro} is in Section \ref{4} (see Theorem \ref{mainthm} and Proposition \ref{fatou_m_res}). Finally, in Section \ref{final} we discuss our generalization of the classical Leau-Fatou flower theorem and we give an example showing that a $m$-resonant germ, $m\geq 2$, might be attracting-non-degenerate with respect to two different directions and  parabolically attracting with respect to one but not to the other.

\medskip

We thank the referee for many comments and remarks which improved the original manuscript.

\section{Preliminaries on germs tangent to the identity in
$\C^m$}\label{preli}

Let
\begin{equation}\label{h}
h(u) := u + H_{k_0+1}(u) + O(\|u\|^{k_0+2}),
\end{equation}
be a germ at $0$ of a holomorphic diffeomorphism of $\C^m$,
$m\ge1$, tangent to the identity, where  $H_{k_0+1}$ is the
first non-zero term in the homogeneous expansion of $h$, and
$k_0\ge 1$. We call the number $k_0+1\ge 2$ the {\it order} of
$h$.

If $m=1$ we have $H_{k_0+1}(u)=A u^{k_0+1}$ and the {\sl
attracting directions} $\{v_1,\ldots, v_k\}$ for $h$ are defined as the $k_0$-th roots of $-\frac{|A|}{A}$. These are precisely
the directions $v$ such that the term $Av^{k+1}$ is in the
direction opposite to $v$. An {\sl attracting petal} $P$ for
$h$ is a simply-connected domain such that $0\in
\partial P$, $h(P)\subseteq P$ and $\lim_{\ell\to\infty}h^{\circ
\ell}(z)=0$ for all $z\in P$, where $h^{\circ \ell}$ denotes the
$\ell$-th iterate of $h$. The attracting directions for $h^{-1}$
are called {\sl repelling directions} for $h$ and the
attracting petals for $h^{-1}$ are {\sl repelling petals} for
$h$.

We state here  the Leau-Fatou flower theorem (see, {\sl e.g.},
\cite{Ab2}, \cite{Br}). We write $a\sim b$ whenever there exist
constants $0<c<C$ such that $ca\le b\le Ca$.

\begin{theorem}[Leau-Fatou]\label{LF}
Let $h(u)$ be as in \eqref{h} with $m=1$. Then for each attracting
direction $v$ of $h$ there exists an attracting petal $P$ for
$h$ (said {\sl centered at $v$}) such that for each $z\in P$
the following hold:
\begin{enumerate}
  \item $h^{\circ \ell}(z)\ne0$ for all $\ell$ and $\lim_{\ell\to\infty}\frac{h^{\circ \ell}(z)}{|h^{\circ
  \ell}(z)|}=v$,
  \item $|h^{\circ \ell}(z)|^{k_0}\sim \frac{1}{\ell}$.
\end{enumerate}
Moreover,  the set given by the union of all $k_0$ attracting petals and $k_0$
repelling petals for $h$ forms a punctured open neighborhood of
$0$.
\end{theorem}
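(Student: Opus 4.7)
The plan is to reduce $h$ to an almost-translation in a suitably chosen coordinate. Since $m=1$, write $H_{k_0+1}(u)=Au^{k_0+1}$, so the attracting directions are the $k_0$ solutions of $Av^{k_0}=-|A|$. Fix one such direction $v$. I would introduce the standard change of variable
\[
\zeta = -\frac{1}{k_0 A u^{k_0}},
\]
taking the branch that maps a small sector around $\R_+ v$ to a region containing $\{\Re \zeta > R\}$ for $R$ large. A direct computation using the assumption $h(u)=u+Au^{k_0+1}+O(|u|^{k_0+2})$ shows that in the $\zeta$-coordinate $h$ becomes
\[
\tilde h(\zeta) = \zeta + 1 + O\!\bigl(|\zeta|^{-1/k_0}\bigr),
\]
so $\tilde h$ is a small perturbation of translation by $1$ on any region where $|\zeta|$ is large.

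Next I would define the petal $P$ as the preimage of a parabolic region $\Pi_R:=\{\zeta:\Re\sqrt{\zeta}>R\}$ (rather than a half-plane), since $\Pi_R$ corresponds in the $u$-plane to a sector with aperture approaching $\pi/k_0$ around $v$, which is required for the covering statement. For $R$ sufficiently large, the error estimate gives $\tilde h(\Pi_R)\subset \Pi_R$; iterating then yields $\tilde h^{\circ \ell}(\zeta)=\ell+\zeta+o(\ell)$. Translating back via the substitution gives both claims simultaneously: the tangential convergence $h^{\circ \ell}(z)/|h^{\circ \ell}(z)|\to v$ comes from the fact that $\zeta_\ell$ stays in a thin parabolic region around the positive real axis, and the asymptotic $|h^{\circ \ell}(z)|^{k_0}\sim 1/\ell$ is a direct restatement of $\zeta_\ell\sim \ell$.

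For the final assertion, the repelling directions of $h$ are the attracting directions of $h^{-1}$, which a short computation identifies as the $k_0$-th roots of $+|A|/A$; these interlace with the $k_0$ attracting directions at angular spacing $\pi/k_0$. Since each petal produced above can be made to have aperture in the $u$-plane arbitrarily close to $\pi/k_0$ around its central direction, adjacent attracting and repelling petals overlap near the bisectors, and their union fills a punctured neighborhood of $0$.

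The main obstacle is precisely the calibration of the petal opening: to get the union-of-petals statement, one cannot use a half-plane in the $\zeta$-coordinate but must take the parabolic region $\Pi_R$, and the $O(|\zeta|^{-1/k_0})$ error term has to be controlled uniformly up to the curved boundary of $\Pi_R$. Establishing invariance $\tilde h(\Pi_R)\subset \Pi_R$ under this more delicate geometry, with $R$ independent of where in $\Pi_R$ one starts, is the central technical step on which both the petal construction and the covering property rest.
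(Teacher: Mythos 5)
The paper itself does not prove Theorem~\ref{LF}: it is the classical Leau--Fatou flower theorem quoted from the literature (\cite{Ab2}, \cite{Br}), so your sketch has to be measured against the standard argument. Its first half you reproduce correctly: the coordinate $\zeta=-1/(k_0Au^{k_0})$, the expansion $\tilde h(\zeta)=\zeta+1+O(|\zeta|^{-1/k_0})$, the identification of the repelling directions, and the derivation of (1) and (2) from $\zeta_\ell=\zeta_0+\ell+o(\ell)$ are all the standard route.

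The step you yourself single out as central, however, is a genuine gap, and with the region you chose it cannot be closed by the estimate you have. Put $\rho(\zeta)=\Re\sqrt{\zeta}$, so $\Pi_R=\{\rho>R\}$. At a boundary point with $|\zeta|$ large, the translation by $1$ increases $\rho$ only by about $\Re\bigl(1/(2\sqrt{\zeta})\bigr)=\Re\sqrt{\zeta}/(2|\zeta|)=R/(2|\zeta|)$, while the error term, of size $O(|\zeta|^{-1/k_0})$ and of uncontrolled argument, can decrease $\rho$ by about $|\zeta|^{-1/2-1/k_0}$. Since $1/2+1/k_0<1$ for $k_0\ge 3$, the possible loss dominates the gain far out along the parabolic boundary, so the invariance $\tilde h(\Pi_R)\subset\Pi_R$ for a fixed $R$ does not follow from your error bound (and there is no reason for it to hold in general). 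Your aperture bookkeeping is also off: $\Pi_R$ pulls back to a region of aperture approaching $2\pi/k_0$ in the $u$-plane, not $\pi/k_0$; and if the aperture only approached $\pi/k_0$ from below (as it does for half-plane preimages), adjacent attracting and repelling petals would \emph{not} overlap --- at every small radius a gap would remain near the bisector directions --- so the covering claim as you argue it would fail. The standard repair is to replace $\Pi_R$ by a region bounded by two rays, for instance $\{\zeta:\Re\zeta>R-\epsilon|\Im\zeta|\}$: the translation by $1$ increases the defining function $\Re\zeta+\epsilon|\Im\zeta|$ by at least $1-(1+\epsilon)\sup|\eta|$, a fixed positive amount once $R$ is large, so invariance is immediate; and its $u$-preimage has aperture $(\pi+2\arctan\epsilon)/k_0>\pi/k_0$ uniformly in the radius, which is exactly what makes consecutive attracting and repelling petals overlap and their union cover a punctured neighborhood of $0$.
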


By the property (1), attracting (resp. repelling) petals
centered at different attracting (resp. repelling)
directions must be disjoint.

\sm
For $m>1$ the situation is more complicated and a complete
description of the dynamics in a full neighborhood of the
origin is still unknown (with the exception of the substantial class of bidimensional examples studied in \cite{AT2}). In this paper we shall use Hakim's results, that we are going to recall here.

\begin{definition}\label{chardir}
Let $h\in\Diff(\C^m, 0)$ be of the form \eqref{h}.  A {\it
characteristic direction} for $h$ is a non-zero vector
$v\in\C^m\setminus\{O\}$ such that $H_{k_0+1}(v)=\lambda v$ for
some $\lambda\in\C$. If $H_{k_0+1}(v)=0$, $v$ is a {\it
degenerate} characteristic direction; otherwise, (that is, if
$\lambda\ne 0$) $v$ is {\it non-degenerate}.
\end{definition}

\sm There is an equivalent definition of characteristic
directions. The $m$-tuple of $(k_0+1)$-homogeneous
polynomials~$H_{k_0+1}$ induces a rational self-map
of~$\C\P^{m-1}$, denoted by~$\widetilde H_{k_0+1}$. Then, under the
canonical projection $\C^m\setminus\{0\}\to\C\P^{m-1}$,
non-degenerate characteristic directions correspond exactly to
fixed points of~$\widetilde H_{k_0+1}$, and degenerate characteristic
directions correspond  to indeterminacy points of~$ \widetilde H_{k_0+1}$.
Generically, there is only a finite number of characteristic
directions, and using Bezout's theorem it is easy to prove
(see, {\sl e.g.}, \cite[Lemma 2.1]{AT}) that this number,
counted with multiplicity, is given
by~$((k_0+1)^m-1)/k_0$.

\begin{definition}\label{directors}
Let $h\in\Diff(\C^m, 0)$ be of the form \eqref{h}. Given a
non-degenerate characteristic direction $[v]\in\C\P^{m-1}$ for
$h$, the eigenvalues $\alpha_1,\ldots,\alpha_{m-1}\in\C$ of the
linear operator
$$
A(v):=\frac{1}{k_0}(d(\widetilde H_{k_0+1})_{[v]}-\id) \colon T_{[v]}\C\P^{m-1}\to T_{[v]}\C\P^{m-1}
$$
are the called the {\it directors} of~$[v]$. If all the directors
of $[v]$ have strictly positive real parts, we call $[v]$ a
{\sl fully attractive} non-degenerate characteristic direction
of $h$.
\end{definition}

\br\label{normal-repres}
Let $[v]$ be a  non-degenerate characteristic direction of $h$. Let $H_{k_0+1}(v)=\tau v$. Then,
replacing $v$ by  $(-k_0\tau)^{-1/k_0}v$, we can assume
\begin{equation}\label{1overk}
H_{k_{0}+1}(v)=-\frac1{k_{0}} v.
\end{equation}\er

\begin{definition}
A representative $v$ of $[v]$ such that \eqref{1overk} is
satisfied is called {\sl normalized}.
\end{definition}

Note that  a normalized representative is uniquely determined
up  to multiplication by $k_{0}$-th roots of unity.

\begin{remark}\label{1-nota-fully-attractive}
For $m=1$ each germ $h\in \Diff(\C, 0)$ with $H_{k_0+1}\ne 0$ of the form \eqref{h} has exactly one non-degenerate
characteristic direction which is clearly fully attractive.
\end{remark}

\sm A {\sl parabolic manifold} $P$  of dimension $1\leq p\leq
m$ for $h\in\Diff(\C^m, 0)$ of the form \eqref{h} is the biholomorphic image of a simply connected
open set in $\C^p$ such that $0\in \partial P$, $f(P)\subset P$
and $\lim_{\ell\to\infty}h^{\circ \ell}(z)=0$ for all $z\in P$. If
$p=1$ the parabolic manifold is called a {\sl parabolic curve},
whereas, if $p=m$ the parabolic manifold is called a {\sl
parabolic domain}. Due to M. Abate \cite{Ab}, in $\C^2$ parabolic curves
always exist for germs of holomorphic diffeomorphisms tangent to the
identity and having an isolated fixed point at the origin, whereas in higher dimension they are known
to exist ``centered'' at non-degenerate characteristic
directions (J. \'Ecalle \cite{Ec}, M. Hakim \cite{Ha}, see also
\cite{ABT}).

On the other hand, Hakim~\cite{H} (based on the previous work by Fatou
\cite{Fa} and Ueda \cite{Ue1}, \cite{Ue2} in $\C^{2}$)  studied the
so-called {\em semi-attractive} case, with one eigenvalue equal to
$1$ and the rest of eigenvalues having modulus less than $1$. She proved that
either there exists a curve of fixed points or there exist
attracting open petals. Such a result has been later
generalized by Rivi \cite{Ri}.
The {\em quasi-parabolic} case of a germ in $\C^2$, i.e.\ having one
eigenvalue $1$ and the other of modulus equal to one, but not a
root of unity has been
studied in \cite{B-M} and it has been proved that, under a certain
generic hypothesis called ``dynamical separation'', there
exist petals tangent to the eigenspace of $1$. Such a result
has been generalized to higher dimension by Rong \cite{R1},
\cite{R2}. We refer the reader to the survey papers \cite{Ab2}
and \cite{Br} for a more accurate review of existing results.

\sm We state here the theorem of \'Ecalle and Hakim needed in
our paper.

\begin{theorem}[\'Ecalle \cite{Ec}, Hakim \cite{Ha}]\label{Ha}
Let $h\in\Diff(\C^m,0)$ be as in \eqref{h} and let $[v]$ be a fully
attractive non-degenerate characteristic direction for $h$.
Then there exist $k_0$ parabolic  domains such that
$h^{\circ j}(z)\ne0$ for all $j$ and $\lim_{j\to\infty}[h^{\circ j}(z)]=[v]$ for all fixed $z$ in one such a parabolic domain.
Moreover, if $v$ is a normalized representative of $[v]$, then
the parabolic domains can be chosen of the form
\begin{equation}\label{para-man}
M^i_{R, C} = \{(x,y)\in\C\times\C^{m-1} : x\in\Pi^{i}_R,\, \|y\|<C|x|\},
\end{equation}
where $\Pi^i_R$, $i=1,\ldots,k_{0}$, are the connected
components of the set $\Delta_R = \{x\in\C :
|x^{k_0}-\frac{1}{2R}|<\frac{1}{2R}\}$, and $R>0$ is sufficiently
large.

\end{theorem}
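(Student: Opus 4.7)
My plan is to reduce to an explicit model in blow-up-type coordinates where the Leau-Fatou behavior in the direction $[v]$ becomes decoupled from a contractive linear dynamics in the transverse directions. First, after a linear change of coordinates I may assume $v=e_1$ and, by the normalization \eqref{1overk}, that $H_{k_0+1}(e_1)=-\frac{1}{k_0}e_1$. I then perform the substitution $u_1=x$, $u_j=x\,y_{j-1}$ for $j=2,\ldots,m$. Expanding $h$ componentwise, the first coordinate becomes
\begin{equation*}
x'=x-\frac1{k_0}x^{k_0+1}+O\bigl(x^{k_0+1}\|y\|+x^{k_0+2}\bigr),
\end{equation*}
while the transverse coordinates satisfy
\begin{equation*}
y'=\bigl(I-x^{k_0}A(v)\bigr)y+O\bigl(x^{k_0}\|y\|^2+x^{k_0+1}\bigr),
\end{equation*}
where $A(v)$ is the linear operator of Definition \ref{directors}.

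Next I invoke the one-dimensional Leau-Fatou theorem (Theorem \ref{LF}) applied to the scalar iteration $x\mapsto x-\frac1{k_0}x^{k_0+1}+\cdots$, which produces $k_0$ petals $\Pi^i_R\subset\C$ on each of which $|x_\ell|^{k_0}\sim 1/\ell$ and $x_\ell\to 0$ while remaining in $\Pi^i_R$. The strategy is to promote these one-dimensional petals to the $m$-dimensional domains $M^i_{R,C}$ by showing that the cone condition $\|y\|<C|x|$ is preserved by the iteration provided $R$ is large. The key tool for the $y$-equation is the hypothesis that every director $\alpha_j$ of $A(v)$ satisfies $\Re\alpha_j>0$: a standard argument yields a Hermitian inner product on $\C^{m-1}$ and a constant $\sigma>0$ such that $\Re\langle A(v)y,y\rangle\ge\sigma\|y\|^2$. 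Since inside $\Pi^i_R$ one has $\Re(x^{k_0})>0$, a squared-norm computation gives
\begin{equation*}
\|y'\|^2\le \|y\|^2-2\sigma\,\Re(x^{k_0})\|y\|^2+\bigl(\hbox{error}\bigr),
\end{equation*}
and combining with $|x_\ell|^{k_0}\sim 1/\ell$ produces a polynomial decay $\|y_\ell\|^2=O(\ell^{-2\sigma/k_0})$, which is faster than $|x_\ell|^2=O(\ell^{-2/k_0})$ as soon as $\sigma>1$; when $\sigma\le 1$ one instead iterates with $\|y\|/|x|$ as the basic unknown.

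Carrying out this bookkeeping, I show by induction on $\ell$ that orbits starting in $M^i_{R,C}$ remain there, that $\|y_\ell\|/|x_\ell|\to 0$, and hence that $h^{\circ\ell}(z)\to 0$ tangent to $[v]$. Invariance of $M^i_{R,C}$ combined with $h^{\circ\ell}(z)\ne 0$ gives that $M^i_{R,C}$ is a parabolic domain; disjointness of the $k_0$ such domains follows from the disjointness of the scalar petals $\Pi^i_R$.

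The principal difficulty lies in controlling the nonlinear error terms in the $y$-equation and in reconciling the different time-scales of the two components: $x_\ell$ decays only like $\ell^{-1/k_0}$, so the contracting factor $I-x^{k_0}A(v)$ acts like $I-O(1/\ell)$, which is exactly critical --- a Gronwall-type summation of $1/\ell$ produces the polynomial rates above, and the error terms must be shown to perturb the exponent $2\sigma/k_0$ only by an arbitrarily small amount. This forces one to use the inner product precisely adapted to $A(v)$ rather than the Euclidean one, and to enlarge $R$ after fixing $C$ so that all error contributions inside $M^i_{R,C}$ are controlled by the leading term. Once these uniform estimates are in place, the conclusions of the theorem follow immediately.
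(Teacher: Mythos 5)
The paper itself gives no proof of Theorem~\ref{Ha}: it is quoted from \'Ecalle and Hakim (detailed proofs are in \cite{Ha}, \cite{Ha1}, \cite{Ari}), so your attempt can only be measured against those arguments and against the closely parallel proof the paper gives for Theorem~\ref{mainthm}. Your strategy (blow-up $u=(x,x\hat y)$, one-dimensional parabolic behaviour of $x$, cone invariance of $\|y\|<C|x|$ via the positivity of the real parts of the directors) is indeed the standard one, but two steps do not hold as written. First, you cannot simply ``invoke Theorem~\ref{LF}'' for the first coordinate: after the blow-up the recursion for $x$ is not an autonomous one-variable map, since its error terms involve $y$ and the higher-order terms of $h$, so the petal invariance and the asymptotics $|x_\ell|^{k_0}\sim 1/\ell$ must be proved simultaneously with the control of $y_\ell$ (e.g.\ by passing to $U=x^{-k_0}$ and showing $U_{\ell+1}=U_\ell+1+o(1)$ on the domain, as in the proof of Theorem~\ref{mainthm}); as written, your argument is circular, though this part is repairable by a routine joint induction.

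The genuine gap is the contraction estimate. From $\Re\langle A(v)y,y\rangle\ge\sigma\|y\|^2$ one only gets
$\Re\bigl(x^{k_0}\langle Ay,y\rangle\bigr)=\Re(x^{k_0})\,\Re\langle Ay,y\rangle-\Im(x^{k_0})\,\Im\langle Ay,y\rangle$,
and the second term is not sign-controlled unless all directors are real (so that $A$ can be made self-adjoint for the chosen product). On the full petal $\Delta_R$ the argument of $x^{k_0}$ ranges over $(-\pi/2,\pi/2)$; near $\partial\Delta_R$ one has $\Re(x^{k_0})$ as small as $R|x|^{2k_0}$ while $|\Im(x^{k_0})|\approx|x|^{k_0}$, so the cross term is of the same order as (or larger than) the term you keep and cannot be absorbed into the error. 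Concretely, for $m-1=1$ and a director $\alpha=1+i\mu$ with $\mu$ large, $|1-x^{k_0}\alpha|>1$ at points of $\Delta_R$ with $\arg(x^{k_0})$ close to $\pi/2$, so $I-x^{k_0}A$ can be expanding there, your displayed inequality $\|y'\|^2\le\|y\|^2-2\sigma\Re(x^{k_0})\|y\|^2+\mathrm{error}$ fails, and the one-step invariance of the cone $\|y\|<C|x|$ (hence of $M^i_{R,C}$) does not follow; the polynomial decay rates you derive, and the fallback iteration on $\|y\|/|x|$, inherit the same defect. This is exactly why the paper, in its own Theorem~\ref{mainthm}, restricts to the thin sector $S_R(\eps)$, where $|\arg(x^{k_0})|<k_0\eps$ makes the estimate $\|\hat y_1\|\le(1-\lambda|x|^{k_0})\|\hat y\|+\cdots$ legitimate. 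To obtain the full-petal statement of Theorem~\ref{Ha} one needs the finer bookkeeping of Hakim and Arizzi--Raissy, e.g.\ controlling the whole products $\prod_{j<\ell}\bigl(I-U_j^{-1}A\bigr)$ by comparison with $(U_0/U_\ell)^{A}$, whose norm is $O\bigl(|U_0/U_\ell|^{\sigma}\bigr)$ uniformly because $|\arg U|<\pi/2$ along the orbit, rather than a one-step norm estimate; your sketch does not supply this ingredient.
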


\section{Multi-resonant biholomorphisms}\label{3}

Given $\{\lambda_1,\ldots,
\lambda_n\}$ a set of complex numbers, recall that a {\em
resonance} is a pair $(j,L)$, where $j\in\{1,\ldots, n\}$ and
$L=(l_1,\ldots, l_n)\in\N^n$ is a multi-index with
$|L|:=\sum_{h=1}^n l_h \ge2$ such that $\lambda_j=\lambda^L$
(where $\lambda^L:=\lambda_1^{l_1}\cdots \lambda_n^{l_n}$). We
shall use the notation
$$
\res_j(\lambda) := \{Q\in\N^n : |Q|\ge 2, \lambda^Q = \lambda_j\}.
$$
With a slight abuse of notation, we denote by $e_j=(0,\ldots, 0,
1, 0,\ldots, 0)$ both the multi-index with $1$ at the $j$-th
position and $0$ elsewhere and the vector with the same entries
in $\C^n$. Note that if $Q\in\res_j(\lambda)$ then
$\lambda^{Q-e_j}=1$ and $P:=Q-e_j \in \6{N}_j$, where
$$
\6{N}_j := \6{P}_j\cup \6{M},
$$
with
$$
\6{P}_j:= \{P\in\Z^n : |P|\ge 1, p_j=-1, p_h\ge 0~\hbox{for}~h\ne j\}
$$
and
$$
\6{M} := \{P\in\N^n : |P|\ge 1\}.
$$
Moreover, if $P\in\6{N}_j$ is such that $\lambda^P =1$, then either $P\in\6{P}_j$, and so $P + e_j\in\res_j(\lambda)$, but $P+e_h\not\in\res_h(\lambda)$ for $h\ne j$ and $k P+e_j\not\in\res_j(\lambda)$ for each integer $k\ge 2$, or $P\in\6{M}$, yielding $k P + e_h\in\res_h(\lambda)$ for $h=1,\dots, n$ for any $k\in\N\setminus\{0\}$.
We shall denote
$$
\6{N} := \bigcup_{j=1}^n \6{N}_j = \6{M}\cup\bigcup_{j=1}^n \6{P}_j.
$$

In the rest of the paper, and without mentioning it
explicitly, we shall consider only germs of diffeomorphisms
whose differential is diagonal.

\begin{definition}\label{m-resonant}
Let $F$ be in $\Dif$, and let $\lambda_1,\ldots, \lambda_n$ be
the eigenvalues of the differential $dF_0$. We say that $F$ is
{\it $m$-resonant with respect to the first $r$ eigenvalues}
$\l_{1},\ldots,\l_{r}$ $(1\le r\le n)$ if there exist $m$
multi-indices $P^1,\dots,P^m\in \N^{r}\times\{0\}^{n-r}$ linearly independent
over $\Q$, so that the  resonances $(j,L)$ with $1\le j\le r$
are precisely of the form
\begin{equation}\label{L}
L = e_j + k_1 P^1 + \cdots + k_m P^m
\end{equation}
with $k_1,\dots, k_m\in\N$ and $k_{1}  + \cdots + k_m\ge 1$. The vectors $P^1,\dots, P^m$ are called {\it generators over $\N$} of the resonances of $F$ in the first $r$ coordinates.
We call $F$ {\it multi-resonant with respect to the first $r$ eigenvalues}  if it is $m$-resonant
with respect to these eigenvalues for some $1\le m\le r$.
\end{definition}

\begin{example}
Let the differential of $F\in \Diff(\C^{4},0)$ have eigenvalues $\l_{1},\ldots,\l_{4}$
such that $\l_1^3= 1$ but $\l_1\ne 1$, $\l_2 = -1$, and $\l_3^{-1}\l_4^2=1$. Then $F$ is $2$-resonant with respect to $\l_{1},\l_{2}$
with generators $P^{1}=(3,0,0,0)$, $P^{2}=(0,2,0,0)$.
On the other hand, $F$ is not multi-resonant with respect to all eigenvalues
because it has the resonance $\l_{3}=\l_{4}^{2}$ which is not of the form \eqref{L}.
% because $Q=(0,0,2,0) + e_3$ cannot be written as $k_1 P_1 + k_2 P_2 + e_3$.
\end{example}

\begin{remark}\label{P}
Note that if $P^1,\dots, P^m$ are generators over $\N$ of (all, that is with $r=n$) the resonances of $F$, each multi-index $P\in\N^{n}$ such that $\lambda^P =1$ is of the form
$$
P = k_1 P^1 + \cdots + k_m P^m
$$
with $k_1,\dots, k_m\in\N$.
\end{remark}

\bl\label{diff}
If $F$ is $m$-resonant with respect to $\l_{1},\ldots,\l_{r}$, then $\l_{j}\ne\l_{s}$ for
$1\le j\le r$ and $1\le s\le n$ with $j\ne s$.
\el

\bpf
Assume that $\l_{j}=\l_{s}$ with $j$ and $s$ as in the statement.
Denote
$$
\Sigma:=\{ k_1 P^1 + \cdots + k_m P^m : k_{j}\in\N, \; k_{1}  + \cdots + k_m\ge 1\},
$$
where $P^{1},\ldots, P^{m}$ are the generators.
Then for any $P\in\Sigma$, $(j,P+e_{j})$ is a resonance corresponding to the identity
$\l_{j}=\l^{P}\l_{j}$. Since $\l_{j}=\l_{s}$, we also have the resonance relation $\l_{j}=\l^{P}\l_{s}$,
which by Definition~\ref{m-resonant}, implies $P+e_{s}-e_{j}\in\Sigma$.
Therefore $\Sigma+(e_{s}-e_{j})\subset\Sigma$, and hence,
by induction, $\Sigma+\N(e_{s}-e_{j})\subset\Sigma$.
On the other hand, fixing any $P\in \Sigma$, we can find $k\in\N$ such that $P+k(e_{s}-e_{j})\notin\6M$, and hence $P+k(e_{s}-e_{j})\notin\Sigma$. We thus obtain a contradiction proving the lemma.
\epf

Let $P, Q\in\6{N}$. We  write $P<Q$ if either $|P|< |Q|$, or
$|P|=|Q|$ but $P$ precedes $Q$ in the lexicographic order,
{\sl i.e.}, $p_h = q_h$ for $1\le h< k\le n$ and $p_k<q_k$.

\me We have the following uniqueness property.

\begin{proposition}\label{unico}
Let $F\in\Dif$ be $m$-resonant. Then the set of generators is unique (up to reordering).
\end{proposition}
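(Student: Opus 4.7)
The plan is to show that any two sets $\{P^1,\dots,P^m\}$ and $\{Q^1,\dots,Q^m\}$ of generators of the resonances of $F$ must coincide as sets. The strategy is to write each generator of one set as an $\N$-linear combination of the generators of the other, obtaining two $m\times m$ matrices that are inverse to one another, and then argue that a non-negative integer matrix whose inverse is also a non-negative integer matrix must be a permutation matrix.

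First I would invoke Remark \ref{P}: each $Q^i$ lies in $\N^r\times\{0\}^{n-r}$ and satisfies $\lambda^{Q^i}=1$ (the latter because, by the ``precisely of the form'' clause in Definition \ref{m-resonant}, $e_j+Q^i$ is a resonance for every $1\le j\le r$, and conversely every resonance is of the given shape in the $P$'s). Hence $Q^i=\sum_{j=1}^{m}a_{ij}P^j$ with $a_{ij}\in\N$, and symmetrically $P^j=\sum_{i=1}^{m}b_{ji}Q^i$ with $b_{ji}\in\N$. Setting $A=(a_{ij})$ and $B=(b_{ji})$ and substituting, one obtains $P^j=\sum_{k}(BA)_{jk}P^k$; the $\Q$-linear independence of $P^1,\dots,P^m$ then forces $BA=I_m$, and since both matrices are square this also gives $AB=I_m$.

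The crux is then to show that an $m\times m$ matrix $A$ with entries in $\N$ whose inverse $B$ also has entries in $\N$ must be a permutation matrix. Reading the diagonal entries of $AB=I_m$, the relation $\sum_k a_{ik}b_{ki}=1$ is a sum of non-negative integers equal to $1$, so exactly one term is $1$ and the rest vanish; let $\sigma(i)$ denote the unique $k$ with $a_{i\sigma(i)}=b_{\sigma(i)i}=1$. The off-diagonal equations $\sum_k a_{ik}b_{kj}=0$ for $i\ne j$ then force $b_{\sigma(i)j}=0$ for all $j\ne i$, so row $\sigma(i)$ of $B$ is the $i$-th standard basis row vector. Distinct $i,i'$ yield distinct $\sigma(i),\sigma(i')$, so $\sigma$ is a permutation of $\{1,\dots,m\}$, $B$ is a permutation matrix, and hence so is $A=B^{-1}$; consequently $Q^i=P^{\sigma(i)}$ and the two sets coincide.

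The only real obstacle is the permutation-matrix step: it is an elementary combinatorial observation, but the bookkeeping must be set up carefully. As a side observation, essentially the same approach shows that the integer $m$ is itself an invariant of $F$: given an $m$-resonant and an $m'$-resonant presentation, writing each generator of one set as an $\N$-linear combination of the generators of the other yields a non-negative integer $m'\times m$ matrix whose rows are $\Q$-linearly independent, hence of rank $m'$, forcing $m'\le m$, and the reverse inequality follows by symmetry.
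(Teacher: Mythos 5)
Your proof is correct and follows essentially the same route as the paper: both express each set of generators as $\N$-linear combinations of the other, use the $\Q$-linear independence to obtain the relations $AB=BA=I_m$ (the paper's equations \eqref{eqone}), and exploit non-negativity and integrality to force the coefficient matrices to be permutation matrices, hence the sets coincide. Your packaging of the last step as the general lemma that an $\N$-matrix with $\N$-inverse is a permutation matrix is just a cleaner phrasing of the paper's coefficient-by-coefficient argument (which instead uses the lexicographic ordering and a contradiction to pin down $P^j=Q^j$).
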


\begin{proof}
Let us now assume by contradiction that there are two sets of generators, $P^1,\dots,P^m$
and $Q^1,\dots,Q^m$, that we can assume to be ordered, {\sl i.e.},  $
P^1<\cdots<P^m$ and $Q^1<\cdots<Q^m$. For each $j\in\{1,\dots, m\}$ we then have
$$
P^j = k_{j,1}Q^1+\cdots + k_{j,m}Q^m,
$$
with $k_{j, s}\in\N$ and at least one $k_{j, s}\ne 0$, say
$k_{j,s(j)}$. Analogously, for $h=1,\dots, m$ we have
$$
Q^h = l_{h,1}P^1+\cdots + l_{h,m}P^m,
$$
with $l_{h, t}\in\N$, and with at least one of them non zero.
Therefore we have
$$
\begin{aligned}
P^j
&= k_{j,1}Q^1+\cdots + k_{j,m}Q^m\\
&= k_{j,1}\left(l_{1,1}P^1+\cdots + l_{1,m}P^m\right) + \cdots+ k_{j,m}\left(l_{m,1}P^1+\cdots + l_{m,m}P^m\right)\\
&= (k_{j,1}l_{1,1} + \cdots + k_{j,m}l_{m,1})P^1 + \cdots + (k_{j,1}l_{1,m} + \cdots + k_{j,m}l_{m,m})P^m,
\end{aligned}
$$
yielding, since $P^1,\dots, P^m$ are linearly independent over
the rationals,
\begin{equation}\label{eqone}
\begin{aligned}
&k_{j,1}l_{1,j} + \cdots + k_{j,m}l_{m,j} = 1\\
&k_{j,1}l_{1,h} + \cdots + k_{j,m}l_{m,h} = 0 \quad\hbox{for}~h\ne j.
\end{aligned}
\end{equation}
Now, since $k_{j, s},l_{h, t} \in\N$, the second equations in
\eqref{eqone} are satisfied only if, for $s=1,\dots, m$,
$k_{j,s}l_{s, h} = 0$ for each $h\ne j$; hence $l_{s(j), h} =
0$ for $h\ne j$ because $k_{j, s(j)} \ne 0$, and therefore
$Q^{s(j)} = l_{s(j),j} P^j$. This implies that in the first
equation in \eqref{eqone} we have $k_{j,s(j)}l_{s(j),j}\ne 0$,
and thus it has to be equal to $1$, yielding
$$
P^j = Q^{s(j)} \quad\hbox{for}~j=1, \dots, m.
$$
Therefore, since we are assuming $P^1<\cdots<P^m$ and
$Q^1<\cdots<Q^m$, we have $s(j)=j$ for all $j$, that is
$P^j=Q^j$, contradicting the hypothesis.
\end{proof}

\begin{definition}
The generators $P^1,\dots,P^m$ are called {\sl ordered} if $P^1<\cdots<P^m$.
\end{definition}

\sm Let $F\in\Dif$ be $m$-resonant (with respect to the first r eigenvalues), and let $P^1,\dots,P^m$ be
the ordered generators over $\N$ of its resonances. By the
Poincar\'e-Dulac theorem \cite{Ar}, we can find a tangent to
the identity (possibly) formal change of coordinates of
$(\C^n,0)$ conjugating $F$ to a germ in a (possibly formal)
{\sl Poincar\'e-Dulac normal form}, {\sl i.e.}, of the form
\begin{equation}\label{P-Dulac}
\widetilde F(z) = Dz + \sum_{s=1}^r\sum_{| K {\bf P}|\ge 2 \atop K\in\N^m} a_{K,s} z^{K {\bf P}} z_s e_s +  \sum_{s=r+1}^n R_s(z)e_s,
\end{equation}
where $D={\rm Diag}(\lambda_1,\dots, \lambda_n)$, we denote
by $K {\bf P} = \sum_{h=1}^m k_h P^h$, and $R_s(z)=O(\|z\|^2)$ for $s=r+1,\ldots,n$.

\begin{remark}\label{PD-limitato}
By the proof of the Poincar\'e-Dulac theorem it follows that,
given any $l\geq 2$, there exists a polynomial (hence
holomorphic) change of coordinates tangent to the identity in
$(\C^n,0)$ conjugating $F$ to a Poincar\'e-Dulac normal form up
to order $l$.
\end{remark}

Poincar\'e-Dulac normal forms are not unique because they
depend on the choice of the resonant part of the
normalization. However, they are all conjugate to each other and, by Lemma \ref{diff}, a conjugation $\psi$ between different Poincar\'e-Dulac normal forms of $F$, which are $m$-resonant with respect to the first $r$ eigenvalues, must have the first $r$ coordinates of the type
\begin{equation}\label{cambio-norm}
\psi_j(z) =\gamma_j z_j +O(\|z\|^2) \quad j=1,\ldots, r
\end{equation}
with $\gamma_j\neq 0$, $j=1,\ldots,r$. Hence  the following
definition is well-posed.

\begin{definition}
Let $F\in\Dif$ be $m$-resonant with respect to $\{\l_1,\ldots,\l_r\}$, and let $P^1,\dots,P^m$ be the
ordered generators over $\N$ of its resonances.  Let
$\2{F}$ be a Poincar\'e-Dulac normal form for $F$ given by
\eqref{P-Dulac}. The {\it weighted order} of $F$ is the minimal
$k_0=|K|\in\N\setminus\{0\}$ such that the coefficient $a_{K,
s}$ of $\2{F}$ is non-zero for some $1\le s\le r$.
\end{definition}

\begin{remark}
The weighted order of $F$ is $+\infty$ if and only if $F$ is formally linearizable in the first $r$ coordinates.
\end{remark}

Let $F\in\Dif$ be $m$-resonant, and let $P^1,\dots,P^m$ be the
ordered generators over $\N$ of its resonances. Write $P^j=(p_1^j,\ldots,p_r^j,0,\ldots,0)$, for $j=1,\ldots,m$. Let $k_0<\infty$ be
the  weighted order of $F$. Let $\2{F}$ be a
Poincar\'e-Dulac normal form for $F$ given by \eqref{P-Dulac}.
Then we set
$$
G(z) = Dz + \sum_{s=1}^r\sum_{|K|= k_0 \atop K\in\N^m} a_{K,s} z^{K {\bf P}} z_s e_s.
$$
Consider the map $\pi\colon (\C^n,0)\to (\C^m, 0)$ defined by
$\pi(z_1,\dots, z_n) := (z^{P^1},\dots, z^{P^m}) = (u_1,\dots,
u_m)$. Therefore we can write
$$
G(z) = Dz + \sum_{s=1}^r\sum_{|K|= k_0 \atop K\in\N^m} a_{K,s} u^K z_s e_s,
$$
and $G$ induces a unique map $\Phi\colon (\C^m, 0)\to (\C^m, 0)$ satisfying $\Phi\circ \pi = \pi\circ G$, which is
tangent to the identity of order greater than or equal to $k_0+1$, and is of the form
$$
\Phi(u) = u + H_{k_0 + 1}(u)+O(\|u\|^{k_0+2}),
$$
where
\begin{equation}\label{eqmatrixhakim}
H_{k_0 + 1}(u) =\left(\begin{array}{c}
				\displaystyle u_1\sum_{|K| = k_0} \left({p^1_1}\frac{a_{K, 1}}{\l_1} + \cdots + {p^1_r}\frac{a_{K, r}}{\l_r}\right)u^K\\
				\vdots\\
				\displaystyle u_m\sum_{|K| = k_0} \left({p^m_1}\frac{a_{K, 1}}{\l_1} + \cdots + {p^m_r}\frac{a_{K, r}}{\l_r}\right)u^K
				\end{array}\right).
\end{equation}

\begin{definition}
We call $u\mapsto u+H_{k_0+1}(u)$ a {\sl parabolic shadow} of $F$.
\end{definition}

\begin{remark}\label{well-posed}
Let $f:u\mapsto u+H_{k_0+1}(u)$ be a parabolic shadow of $F$.
Then clearly $H_{k_0+1}$  remains unchanged under (holomorphic
or formal) changes of coordinates of the type $z\mapsto
z+O(\|z\|^2)$ which preserve Poincar\'e-Dulac normal forms of
$F$. In case of a linear change of coordinates preserving
Poincar\'e-Dulac normal forms, by Lemma~\ref{diff}, it has to
be of the form $(z_{1},\ldots,z_{n})\mapsto
(\mu_{1}z_{1},\ldots,\mu_{r}z_{r}, l(z))$ with
$l(z)\in\C^{n-r}$. Then the $a_{K,j}$'s become
$a_{K,j}\mu^{K\7P}$, where
$\mu=(\mu_{1},\ldots,\mu_{r},0,\ldots, 0)\in\C^{n}$, and the
parabolic shadow becomes $\tilde{f}:u\mapsto
u+\2H_{k_{0}+1}(u)$ where
\begin{equation}\label{cambio-H}
\2H_{k_{0}+1}(u):= \sum_{j=1}^m\sum_{|K|=k_{0}} \mu^{K\7P}  ( {p^j_1}\frac{a_{K, 1}}{\l_1} + \cdots + {p^j_r}\frac{a_{K, r}}{\l_r} ) u^{K} u_{j}e_{j}.
\end{equation}
In particular, if $[v]\in \C\mathbb{P}^{m-1}$ is a non-degenerate characteristic direction of $f$
with $v\in\C^m$ a normalized representative
({\sl i.e.}, satisfying \eqref{1overk}), then
$$
\tilde v := (\mu^{-P^{1}} v_{1}, \ldots,  \mu^{-P^{m}} v_{m}),
$$
is a normalized representative of a non-degenerate characteristic direction for $\tilde{f}$. Moreover, the matrix
$A(\tilde v)$ of $\tilde{f}$ is conjugated to $A(v)$ (see \cite{Ha} and \cite{Ari})---hence, if $[v]$ is fully attractive for $f$, then so is $[\tilde v]$
for $\tilde f$.
\end{remark}

\begin{definition}\label{basic-def}
Let $F\in\Dif$ be $m$-resonant with respect to $\l_{1}, \ldots,\l_{r}$, with $P^1,\dots,P^m$ being the ordered generators over $\N$ of the resonances.
Let $k_0<+\infty$ be the weighted order of $F$. Let $f$ be a parabolic shadow of $F$.
We say that $F$ is {\sl $(f,v)$-attracting-non-degenerate} if $v\in \C^m$ is a normalized representative of a fully attractive
non-degenerate characteristic direction for $f$.

If $F$ is $(f,v)$-attracting-non-degenerate and  $f(u)= u+H_{k_0+1}(u)$ with $H_{k_0+1}$ as in \eqref{eqmatrixhakim},  we say that $F$ is
{\sl $(f,v)$-parabolically attracting} with respect to $\{\l_1,\ldots, \l_r\}$ if
\begin{equation}\label{parabolic}
\Re\Bigg(\sum_{|K|=k_0\atop K\in\N^m} {\frac{a_{K, j}}{\l_j}} v^{K}\Bigg) <0 \quad j=1,\ldots, r.
\end{equation}

We say that $F$ is {\sl attracting-non-degenerate} (resp. {\sl
parabolically attracting}) if $F$ is $(f,v)$-attracting-non-degenerate
(resp. $(f,v)$-parabolically attracting) with respect to
some parabolic shadow $f$ and some   normalized representative
$v\in \C^m$ of a fully attractive non-degenerate characteristic
direction for $f$.
\end{definition}

\begin{remark}
By Remark \ref{1-nota-fully-attractive}, if $F$ is $m$-resonant with $m=1$, then it is one-resonant as defined in \cite{BZ}; it is easy to check that $F$ is attracting-non-degenerate (resp. parabolically attracting)
according to Definition \ref{basic-def} if and only if it is so
according to the corresponding definitions in \cite{BZ}.
\end{remark}

\begin{remark}
According to Remark \ref{well-posed}, if $F$ is $(f,v)$-attracting-non-degenerate (resp. $(f,v)$-parabolically attracting) with
respect to some parabolic shadow $f$ and some normalized representative $v\in \C^m$, then it is so with respect to any parabolic shadow
(for the corresponding normalized representative  $\tilde{v}$). It might also happen, as we show in Section \ref{esempio}, that $F$ is $(f,v')$-attracting-non-degenerate (resp. $(f,v')$-parabolically attracting) with respect to the same parabolic shadow $f$ but to a different direction $v'$.
\end{remark}

\section{Dynamics of multi-resonant
maps}\label{4}

\begin{definition}
Let $F\in\Dif$. We call a {\sl (local) basin of attraction (or attracting basin) for $F$ at
$0$} a nonempty (not necessarily connected) open set
$U\subset\C^n$ with $0\in \1U$, contained in the domain of definition of $F$, which is $F$-invariant and such that $F^{\circ m}(z)\to 0$ as
$m\to \infty$  whenever $z\in U$.
\end{definition}

\begin{theorem}\label{mainthm}
Let $F\in\Dif$ be $m$-resonant with respect to the  eigenvalues
$\{\l_1,\ldots\l_r\}$ and of weighted order $k_0$. Assume that
$|\l_j|=1$ for $j=1, \dots, r$ and $|\l_j|<1$ for
$j=r+1,\ldots, n$. If $F$ is parabolically attracting, then
there exist (at least) $k_0$ disjoint basins of attraction
having $0$ at the boundary.
\end{theorem}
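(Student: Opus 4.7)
The plan is to lift the parabolic domains of the parabolic shadow $f$ from $\C^m$ to $\C^n$ via the monomial map $\pi$, and then prove invariance and convergence using a Lyapunov-type argument powered by the parabolically attracting inequality \eqref{parabolic}. First, by Remark \ref{PD-limitato} I may assume that $F$ is in Poincar\'e--Dulac normal form up to a large order $N\gg k_0$, so that
\[
F_j(z)=\l_j z_j\Bigl(1+\sum_{1\le |K|\le N/|\7P|}\frac{a_{K,j}}{\l_j}z^{K\7P}\Bigr)+O(\|z\|^{N+1}),\qquad 1\le j\le r,
\]
and $F_s(z)=\l_s z_s+O(\|z\|^2)$ for $s>r$. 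By construction, $\pi\circ F=\Phi\circ\pi+O(\|z\|^{N'})$ for some $N'\to\infty$ with $N$, where $\Phi$ agrees with the parabolic shadow $f(u)=u+H_{k_0+1}(u)$ through order $k_0+1$. Since $F$ is $(f,v)$-attracting-non-degenerate with $v$ normalized, Theorem \ref{Ha} produces $k_0$ parabolic domains $M^1_{R,C},\dots,M^{k_0}_{R,C}\subset\C^m$ for $f$, on which $\Phi^{\circ\ell}(u)\to0$ with $\Phi^{\circ\ell}(u)/\|\Phi^{\circ\ell}(u)\|\to v$ and $\|\Phi^{\circ\ell}(u)\|^{k_0}\sim1/\ell$.

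For each $i=1,\dots,k_0$ I would define a candidate basin
\[
B^i:=\{z\in\C^n:\pi(z)\in M^i_{R,C},\ \|z'\|<\delta,\ |z_j|<\eta\ \text{for }j\le r\},
\]
with $z'=(z_{r+1},\dots,z_n)$ and $R$ large, $C,\delta,\eta$ small and to be chosen. The goal is to show $F(B^i\cap\Omega)\subset B^i\cap\Omega$ for a suitable shrinking neighborhood $\Omega$ of $0$ and that $F^{\circ\ell}(z)\to0$ for $z\in B^i$. The invariance of the $\pi$-component follows from the fact that $\pi\circ F$ is, up to very small errors, the map $\Phi$ on $M^i_{R,C}$, together with Hakim's invariance of $M^i_{R,C}$ under $\Phi$; the error terms $O(\|z\|^{N'})$ are negligible compared to the $k_0+1$ order leading dynamics as soon as $N'$ is large enough. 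Invariance of $\|z'\|<\delta$ is immediate from $|\l_s|<1$, $s>r$, which also gives exponential decay $\|z'\|\lesssim\mu^\ell$ for some $\mu<1$.

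The heart of the argument is to control the $|\l_j|=1$ coordinates $z_1,\dots,z_r$, which do not contract linearly. For these, from the normal form
\[
|F_j(z)|^2=|z_j|^2\Bigl|1+\sum_{|K|=k_0}\frac{a_{K,j}}{\l_j}u^K+\text{higher order in }u\Bigr|^2+O(\|z\|^{2N}),
\]
where $u=\pi(z)$. Along the parabolic dynamics $u_\ell=\pi(F^{\circ\ell}(z))$ one has $u_\ell=\rho_\ell v+o(\rho_\ell)$ with $\rho_\ell^{k_0}\sim1/\ell$, so
\[
|F_j(z)|^2\le|z_j|^2\Bigl(1+2\rho_\ell^{k_0}\,\Re\sum_{|K|=k_0}\frac{a_{K,j}}{\l_j}v^K+o(\rho_\ell^{k_0})\Bigr).
\]
The parabolically attracting condition \eqref{parabolic} forces the bracket to be $1-c/\ell+o(1/\ell)$ with $c>0$, hence $|F^{\circ\ell}_j(z)|\to0$ (at rate $\ell^{-c/2}$). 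This simultaneously guarantees that $\{|z_j|<\eta\}$ is preserved and supplies convergence to $0$ of the partial coordinates; combined with $\pi(F^{\circ\ell}(z))\to0$, this gives $F^{\circ\ell}(z)\to0$ on each $B^i$. Since the $M^i_{R,C}$ are pairwise disjoint and $\pi$ is continuous, the $B^i$ are disjoint. The existence of a neighborhood basis of $0$ with $F(B^i\cap\Omega_j)\subset B^i\cap\Omega_j$ follows by choosing $R$ larger and $C,\delta,\eta$ smaller.

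The main obstacle is the careful bookkeeping needed to justify that the remainder $O(\|z\|^{N+1})$ from the truncated normal form and the non-leading terms in the $u$-expansion of $F_j$ are dominated by the leading rate $c/\ell$, given that the parabolic shadow controls only $\pi(z)$ and not each $z_j$ separately. The standard device is to show first that $|\pi(z)|\le C'|z_j|^{p_j^t}$ stays comparable along orbits (because $\|z'\|$ decays exponentially and $|z_j|$ decays polynomially), so that $\|z\|^{N+1}$ can be absorbed into a small multiple of $|z_j|^2\rho_\ell^{k_0}$ once $N$ is taken large enough relative to the exponents of the generators $P^1,\dots,P^m$. Once this is done, the Lyapunov function $\Psi(z):=\sum_{j\le r}|z_j|^2+$(a multiple of $\|z'\|^2)$ is driven to $0$ along orbits, and the proof is complete.
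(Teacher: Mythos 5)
Your overall strategy --- lift Hakim's parabolic domains of the shadow through the monomial map $\pi$ and use \eqref{parabolic} to force decay of the neutral coordinates --- is the same as the paper's, but your definition of the candidate basin leaves a genuine gap that the paper's construction is specifically designed to close. On your set $B^i=\{\pi(z)\in M^i_{R,C},\ \|z'\|<\delta,\ |z_j|<\eta \hbox{ for } j\le r\}$ the size of $z$ is not controlled by the size of $u=\pi(z)$: since the generators $P^t$ involve several coordinates, one can have $|z_1|\approx\eta$ while $\|u\|$ is arbitrarily small (take another coordinate entering the monomial tiny). Consequently the remainders are $O(\|z\|^{N+1})\approx\eta^{N+1}$, a fixed constant on part of $B^i$, and no choice of $N$ makes them negligible against the parabolic drift, which is of size $\|u\|^{k_0+1}\to 0$; both the claimed invariance of $\pi(z)\in M^i_{R,C}$ and the asymptotics $u_\ell=\rho_\ell v+o(\rho_\ell)$ for the true orbit therefore do not follow. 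The same problem infects your Lyapunov estimate for $|z_j|$, $j\le r$: the additive error in $F_j$ is not a multiple of $z_j$, so it can only be absorbed if $\|z\|$ is a priori bounded by a power of $\|u\|$; your proposed patch (``$|\pi(z)|\le C'|z_j|^{p_j^t}$ stays comparable along orbits'', plus ``$|z_j|$ decays polynomially'') goes in the wrong direction --- it bounds $u$ by $z_j$, not $z$ by $u$ --- and is circular, since the polynomial decay of $|z_j|$ is exactly what is to be proved.

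The paper closes this gap by building the coupling into the basin itself: $\widetilde B=\{z : |z_j|<|x|^\beta \hbox{ for all } j=1,\dots,n,\ \pi(z)\in B^1_{R,C}(\eps)\}$, where $x$ is the coordinate of $u$ along the normalized direction $v$ and $\eps$ is a small aperture chosen so that \eqref{inc-vert} holds. Then $\|z\|\le n|x|^\beta$, so after conjugating $F$ to a Poincar\'e--Dulac normal form up to an order $l$ with $\beta l>k_0+1$ (condition \eqref{beta}) all remainders are $O(|x|^{\beta(l+1)})$, hence dominated both by the drift of order $|x|^{k_0+1}$ in the $u$-dynamics and by the gain $c\,|x|^{k_0}\cdot|x|^\beta$ coming from \eqref{parabolic} in each coordinate $j\le r$. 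The real work is then to show that the coupling reproduces itself after one step, i.e.\ $|F_j(z)|<|x_1|^\beta$ with $x_1$ the new parabolic coordinate; this requires comparing $|x|$ with $|x_1|$ (in the chart $U=x^{-k_0}$ one proves $|U_1|\le p(U)^{-k_0/\beta}|U|$) and dictates the additional constraint \eqref{betabis} on $\beta$. Without this self-propagating coupling between $|z_j|$ and $|x|$, or an equivalent device, your invariance and convergence claims do not go through.
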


\begin{proof}
Let $P^1,\dots,P^m$ be the ordered generators over $\N$ of the resonances of $F$.
Up to biholomorphic conjugation, we can assume that $F(z) = (F_1(z),\dots, F_n(z))$ is of the form
$$
\begin{aligned}
&F_j(z) = \lambda_j z_j \Bigg(1 + \sum_{k_0\le | K|\le k_l \atop K\in\N^m}  \frac{a_{K,j}}{\lambda_j} z^{K {\bf P}} \Bigg) + O\left(\|z\|^{l+1}\right), \quad j=1,\ldots, r,\\
&F_{j}(z)=\l_{j} z_{j} + O(\|z\|^{2}) , \quad\quad\quad   \quad\quad\quad\quad\quad\quad \quad\quad\quad\quad j=r+1,\ldots, n,
\end{aligned}
$$
where
$$
k_l:= \max \{|K| : | {K} {\bf P}|\le l \},
$$
with $K{\bf P}:= \sum_{h=1}^m k_h P^h$, and $l> 1$ will be chosen at a later stage (see \eqref{beta}).

We consider the map $\pi\colon (\C^n,0)\to (\C^m,0)$ defined by $\pi(z) = u := (z^{P^1},\dots, z^{P^m})$. Then we can write
$$
F_j(z) =   G_{j}(u,z) + O\left(\|z\|^{l+1}\right), \quad  G_{j}(u,z):= \lambda_j z_j
\Bigg(1 + \sum_{k_0\le |K|\le k_l \atop K\in\N^m} \frac{a_{K,j}}{\l_j} u^K \Bigg),
\quad j=1,\ldots, r.
$$
The composition $\phi:= \pi\circ F$ can be written as
$$
\phi(z)=\Phi(u,z): =  \1\Phi(u) + g(z), \quad \1\Phi(u)=  u + H_{k_0 + 1}(u) + h(u),
$$
where $\Phi\colon\C^m\times\C^n\to \C^m$, $\1\Phi$ is induced by $G$ via $\pi\circ G=\1\Phi\circ \pi$, the homogeneous polynomial $H_{k_0+1}(u)$ has the form \eqref{eqmatrixhakim}, and where $h(u) = O(\|u\|^{k_0+2})$ and $g(z) = O(\|z\|^{l+1})$.

Since $F$ is attracting-non-degenerate by hypothesis, its parabolic shadow
$u\mapsto u + H_{k_0 + 1}(u)$ has a fully attractive
non-degenerate characteristic direction $[v]$, such that $v\in
\C^m$ is a normalized representative, {\sl i.e.}, $v$ satisfies
\eqref{1overk} and the real parts of the eigenvalues of
$A=A(v)$ are all positive. In particular, we can apply Theorem
\ref{Ha} to $\overline{\Phi}(u)$. Then there exist $k_0$
disjoint parabolic domains $M^i_{R,C}$, $i=1,\ldots, k_0$,
for $\1\Phi$ at $0$ in which every point is attracted to the
origin along a trajectory tangent to $[v]$. We shall use linear
coordinates $(x,y)\in \C\times \C^{m-1}$ where $v$ has the form
$v = (1,0,\dots,0)$ and where the matrix $A$ is in Jordan
normal form.

We will construct  $k_0$ basins of attraction $\2B^i_{R,
C}\subset \C^n$, $i=1,\ldots, k_0$ for $F$ in such a way that
each $\2B^i_{R, C}$ is projected into $M^i_{R,C}$ via $\pi$. The
parabolic domains $M^i_{R,C}$'s are given by
\eqref{para-man}, and we can assume that the component
$\Pi^1_R$ is chosen centered at the direction $1$. We first
construct a basin of attraction based on $M^1_{R,C}$. We
consider the sector
$$
S_R(\eps) : = \{x\in\Delta_R : |{\sf
Arg}(x)|<\eps\}\subset \Pi^1_R,
$$
for some $\eps>0$ small to be chosen later, and we let
\[
B^1_{R, C}(\eps) = \{(x,y)\in\C\times\C^{m-1} : x\in S_R(\eps),\, \|y\|<C|x|\}.
\]
Let $\beta>0$  and let
$$
\widetilde B:= \{z\in\C^n : |z_j|< |x|^\beta ~\hbox{for}~j=1,\dots, n, \; u = \pi (z)\in B^1_{R, C}(\eps) , \; u = (x, y)\in\C\times \C^{m-1}\},
$$

First of all, taking $\beta>0$ sufficiently small, since $x$ is
a linear combination of $z^{P^1},\ldots, z^{P^m}$, it is easy
to see that $\widetilde B$ is  an open non-empty set  of $\C^n$
and $0\in\partial \widetilde B$.

Next, we prove that $\widetilde B$ is $F$-invariant. Let $z\in
\widetilde B$ and let $u=\pi(z)$. We consider the blow-up
$\widetilde \C^m$ of $\C^m$ at the origin, in the local chart
centered at $v$, where the projection $\widetilde \C^m\mapsto
\C^m$ is given by $(x,\hat y)\mapsto
(x,y)=(x,x\hat y)$. Since $d\1\Phi_0=\id$, the map $\Phi$ can be
lifted to $\widetilde \C^m\times\C^n$ with values in $\widetilde \C^m$. In the coordinates $\hat u=(x,\hat
y)$, the lifting of the map $\Phi$ to $\widetilde \C^m\times \C^n$ takes then
the form \cite{Ha, Ari}:
$$
\begin{aligned}
&\3\Phi_1(x,\hat y,z) = x - \frac{1}{k_0} x^{k_0 + 1} + h_1(x,x\hat y) + g_1(z),\\
&\3\Phi'(x,\hat y,z) = (I - x^{k_{0}} A)\hat y + O(|x|^{k_0+1}) + O(|x|^{-1}\|z\|^{l+1}),
\end{aligned}
$$
where we denote by $\3\Phi'(u,z) = (\3\Phi_2(u,z),\dots, \3\Phi_m(u,z))$.

We now consider the change of coordinates $(U,\hat y)= (x^{-k_0},\hat y)$. In these new coordinates we have
$$
\begin{aligned}
&\widetilde\Phi_1(U,\hat y,z) =\Phi_1(U^{-1/k_0}, \hat y, z)^{-k_0},\\
&\widetilde\Phi'(U,\hat y,z) = \3\Phi'(U^{-1/k_0}, \hat y, z).
\end{aligned}
$$
Note that $x\in S_R(\eps)$ if and only if $U\in H_R(\eps)$, where
$$
H_R(\eps):=\{w\in \C: \Re w>R , |{\sf Arg}(w)|<k_0\eps\}.
$$

Now fix $0<\delta<1/2$ and $0<c'<c$. Since $\Re \left(\sum_{|K|
= k_0} \frac{{a_{K,j}}}{\l_j}v^K\right)<0$ by the parabolically attracting hypothesis, there exists $\eps>0$ such that
\begin{equation}\label{inc-vert}
\left|1+ \sum_{|K|= k_0 \atop K\in\N^m} \frac{a_{K,j}}{\l_j}  u^{K} \right| \le 1 - 2{c }|x|^{k_{0}},
\end{equation}
for all $u\in B^1_{R,C}(\eps)$. We choose $\beta>0$ such that
\begin{equation}\label{betabis}
\beta(\delta +1 ) -c'k_0<0
\end{equation}
and we  choose $l>1$ so that
\begin{equation}\label{beta}
\beta l > k_0 + 1.
\end{equation}

We have
$$
\begin{aligned}
\widetilde\Phi_1(U,\hat y,z)
&=U  \left(\frac{1}{1 - \frac{1}{k_0 U} + U^{1/k_0}h_1(U^{-1/k_0},y) + U^{1/k_0} g_1(z)}\right)^{k_0}.
\end{aligned}
$$
Since in $\widetilde B$ we have $\|z\|\le n|x|^\beta$, and
$\|u\|\le |x| + \|y\|\le (1+C) |x|$, recalling that $h_1(u)=
O(\|u\|^{k_0+2})$ and $g_1(z) = O(\|z\|^{l+1})$, we obtain that
there exists $K>0$ such that
$$
|U|^{1/k_0}{|h_1(U^{-1/k_0},y)|} \leq K |U|^{1/k_0} |U|^{-(k_0+2)/k_0} = K |U|^{-1-1/k_{0}},
$$
and
$$
|U|^{1/k_0}{|g_1(z)|}\leq K |U|^{1/k_0} |x|^{\beta (l+1)}=K |U|^{(1-\beta (l+1))/k_0}.
$$
Therefore, by \eqref{beta}, if  $R$ is sufficiently large, for
any $z\in \widetilde B$ (and hence for any $U\in H_R(\eps)$), we
have
\begin{equation}\label{nu}
\2\Phi_1(U,\hat y,z) = U+1+\nu(U,\hat y,z)
\end{equation}
with $|\nu(U,\hat y,z)|<\delta<1/2$. In particular,
$U_1:=\2\Phi_1(U,\hat y,z)\in H_R(\eps)$. Therefore we
proved that (under a suitable choice  of $R$)
\begin{equation}
z\in \widetilde B \Rightarrow x_1:=\Phi_1(u,z)\in S_R(\eps).
\end{equation}

Next we check that $\hat y_1:=\3\Phi'(x,\hat y,z)$ satisfies $\|\hat y_1\|< C$. With the same argument as in \cite{Ha}, we obtain for suitable $K>0$:
$$
\begin{aligned}
\|\hat y_1\|
&\le \|\hat y\|(1 -\lambda |x|^{k_{0}}) +K|x|^{k_0+1}+ K|x|^{-1}\|z\|^{l+1}\\
&\le \|\hat y\|(1 -\lambda |x|^{k_{0}}) +K|x|^{k_0+1}+ K|x|^{\beta(l+1)-1}\\
&\le C(1 -\lambda |x|^{k_{0}}) +K|x|^{k_0+1}+ K|x|^{\beta(l+1)-1}\\
&\leq C
\end{aligned}
$$
where  $\lambda>0$ is such that the real parts of the
eigenvalues of $A=A(v)$ are all greater than $\lambda$, and so
$\|\hat y_1\|\leq C$  for $R$ sufficiently large, in view of
\eqref{beta}. Hence we proved that if $z\in \widetilde B$, then
\begin{equation}\label{pi}
\pi(F(z))=\Phi(u, z)\in B^{1}_{R,C}(\eps).
\end{equation}

Now, given $z\in \widetilde B$, we have to estimate $|F_j(z)|$ for $j=1,\dots, n$.
We first examine the components  $F_j$ for $j=r+1,\ldots, n$.
Set $z':=(z_1,\ldots, z_r)$ and $z'':=(z_{r+1},\ldots, z_n)$.
Then
\[
F(z)''=M z''+h(z) z,
\]
where $M$ is the $(n-r)\times (n-r)$ diagonal matrix with
entries $\lambda_j$ ($j=r+1,\ldots, n$) and $h$ is a
holomorphic $(n-r)\times n$ matrix valued function in a
neighborhood of $0$ such that $h(0)=0$. If $z\in \2B$, then
$|z_{j}|<|x|^\beta$ for all $j$. Moreover, since $|\lambda_j|<1$ for
$j=r+1,\ldots, n$, it follows that there exists $a<1$ such that
$|\l_{j}z_{j}|<a|z_{j}|<a|x|^\beta$ for $j=r+1,\ldots, n$. Also, let $0<b<1-a$. Then, for $R$
sufficiently large, it follows that $\|h(z)\|\leq b/n$ if $z\in
\2B$. Hence, letting $p=a+b<1$, we obtain
\begin{equation}
\label{p1}
|F_{j}(z)|\leq |\l_{j} z_{j}|+\|h(z)\| \|z\|<a|x|^\beta+\frac{b}{n} n|x|^\beta= (a+b)|x|^\beta=p |x|^\beta.
\end{equation}
Now, we claim that for $R$ sufficiently large, it follows that
\begin{equation}\label{plimio}
|x|\leq \frac{1}{p^{1/\beta}}|x_1|,
\end{equation}
where $x_1=\Phi_{1}(u,z)$. Indeed, \eqref{plimio} is equivalent to
$|U_1|\leq p^{-k_{0}/\beta} |U|$ and hence to
\[
\frac{|U+1+\nu(U,\hat y,z)|}{|U|}\leq p^{-k_{0}/\beta}.
\]
But the limit for $|U|\to \infty$ in the left-hand side is $1$
and the right-hand side is $>1$, thus \eqref{plimio} holds
for $R$ sufficiently large.
Hence, by \eqref{p1} and \eqref{plimio} we obtain
\begin{equation}\label{p2}
|F_{j}(z)|<  |x_1|^\beta, \quad j=r+1,\ldots,n.
\end{equation}

For the other coordinates, we have
$$
F_j(z) = \lambda_j z_j \Bigg(1+ \sum_{|K|= k_0 \atop K\in\N^m} \frac{a_{K,j}}{\l_j}  u^{K} + f_j(u) \Bigg)+ g_j(z),
\quad f_{j}=O(\|u\|^{k_{0}+1}), \quad g_{j}=O(\|z\|^{l+1}),
$$
for $j=1,\ldots,r$.
Thanks to \eqref{inc-vert} we have
$$
\left| 1+ \sum_{|K|= k_0 \atop K\in\N^m} \frac{a_{K,j}}{\l_j}  u^{K} + f_j(u)\right|  \le 1 - {c }|x|^{k_{0}}.
$$
Moreover, if $z\in \widetilde B$ and $R$ is sufficiently large, we have, for a suitable $D>0$,
$$
|g_j(z)|\le D\|z\|^{l+1} < D |x|^{\b(l+1)}.
$$
Therefore for $j=1,\ldots, r$
\begin{equation}\label{Fj}
\begin{aligned}
|F_j(z)|
&\le |\lambda_j| |x|^\b \left( 1 - \frac{c }{|U|} \right) +D |x|^{\b(l+1)}, \\
&\le  |x|^\b \left( 1 - \frac{c }{|U|}\right)+ \frac{D}{|U|^{\b l/k_0}} |x|^\b\\
&\le \left( 1 - \frac{c }{|U|}+ \frac{D}{|U|^{\b l/k_0}} \right)|x|^\b.
\end{aligned}
\end{equation}
Since we have chosen $\beta l > k_0+1$ in \eqref{beta}, we get $\beta l/k_{0}>1$. Hence, if $R$ is
sufficiently large, for all $U\in H_R(\eps)$
$$
p(U) : = 1 - \frac{c }{|U|}+ \frac{D}{|U|^{\b l/k_0}} <1.
$$
Now we claim that, setting $x_1 =  \Phi_1(u,z)$, we get
$$
|x|\le p(U)^{-1/\beta}|x_1|,
$$
which is equivalent to
\begin{equation}\label{stimadue}
|U_1| \le p(U)^{-k_0/\beta}|U|.
\end{equation}
%Using Hakim's computations {\bf insert citation} (see also {\bf [Abate-Duke]}), that can be performed identically in $\C^m$, we get
Since, by \eqref{nu}, we have
$$
U_1 = U + 1 + \nu (U, \hat y, z),
$$
with $|\nu(U, \hat y, z)|\le \delta$, we obtain
$$
\frac{|U_1|}{|U|} = \frac{|U + 1 + \nu (U,\hat y, z)|}{|U|}
\le  1 + \frac{1}{|U|} + \frac{|\nu(U,\hat y,z)|}{|U|}
\le 1 + \frac{1+ \delta }{|U|}.
$$
On the other hand, by our choice of $0<c'<c$ and taking $R$
sufficiently large, we have
$$
\left(1-\frac{c'}{|U|}\right)^{-k_0/\beta} \le p(U)^{-k_0/\beta},
$$
and hence, in order to prove \eqref{stimadue}, we just need to
check that
$$
1 + \frac{1+ \delta }{|U|} \le \left(1-\frac{c'}{|U|}\right)^{-k_0/\beta}.
$$
But
$$
\left(1-\frac{c'}{|U|}\right)^{-k_0/\beta} = 1 + \frac{k_0}{\beta}\frac{c'}{|U|} + O\left(\frac{1}{|U|^{2}}\right),
$$
and since \eqref{betabis} ensures that $\delta + 1 -c'k_0/\beta<0$, if $R$ is sufficiently large, \eqref{stimadue} holds, and the claim is proved.
Therefore, in view of \eqref{Fj} we have
\begin{equation}\label{}
|F_j(z)| < |x_1|^\beta, \quad j=1,\dots, r,
\end{equation}
which together with \eqref{p2} and \eqref{pi} implies
 $F(\widetilde B)\subseteq \widetilde B$.

Then, setting inductively $u^{(l)}=(x^{(l)},y^{(l)}): = \pi(F^{\circ (l-1)}(z))$, and denoting by $\rho_j\colon\C^n\to\C$ the projection $\rho_j(z)= z_j$, we obtain
$$
\left|\rho_j\circ F^{\circ l}(z)\right|\le \left|x^{(l)}\right|^\beta
$$
 for all $z\in \widetilde B$.
Moreover, as a consequence of \eqref{nu},  we have $\lim_{l\to
+ \infty}x^{(l)} = 0$, implying that $F^{\circ l}(z)\to 0$ as
$l \to +\infty$. This proves that $\widetilde B$ is a basin of
attraction of $F$ at $0$.

Finally, since the same argument can be repeated for each of
the parabolic domains  $M^i_{R,C}$ of $\overline\Phi$, and
those are disjoint, we obtain at least $k_0$ disjoint basins of
attraction, and this concludes the proof.
\end{proof}

\subsection{Fatou coordinate for $m$-resonant parabolically attracting germs}

In this subsection we shall prove the existence of the so-called {Fatou coordinate} for $m$-resonant parabolically at\-tracting germs, ending the proof of Theorem \ref{mainintro}. Given $F\in\Diff(\C^n, 0)$ with $B$ attracting basin of parabolic type, a holomorphic function $\psi\colon B\to \C$ so that $F$ is semi-conjugated to a translation, {\sl i.e.},
\begin{equation}\label{Fatou}
\psi\circ F(z) = \psi(z) +1
\end{equation}
for all $z\in B$, is usually called the \emph{Fatou coordinate} of $F$ relative to $B$. In dimension one, given $f\in \Diff(\C,0)$ tangent to the identity, $f^q\not\equiv\id$ for any $q\ge 1$, there exists the Fatou coordinate of $f$ relative to $P$, for any attracting petal $P$ (see for example \cite[Theorem~3.2]{Ab2}), and in higher dimension Hakim in \cite{Ha1} (see also \cite{Ari}) proved the existence of the Fatou coordinate for any tangent to the identity germ with a fully attractive non-degenerate characteristic direction relative to the associated attracting basin (see \cite{V} for more recent results also in the degenerate case).

\begin{proposition}\label{fatou_m_res}
Let $F\in\Dif$ be $m$-resonant with respect to the first $r\le n$ eigenvalues. Assume that $|\l_j|=1$ for $j=1,\ldots, r$, $|\l_j|<1$ for $j=r+1,\dots, n$, and suppose that $F$ is parabolically attracting. Then for each attracting basin $B$ of~$F$ given by Theorem \ref{mainthm}, there exists a Fatou coordinate $\mu\colon B\to \C$.
\end{proposition}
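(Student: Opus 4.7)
My plan is to construct $\mu$ by pulling back to $B$ the Fatou coordinate of the parabolic shadow $\overline{\Phi}$ via $\pi$, and correcting by a renormalizing limit to absorb the discrepancy between $\pi\circ F$ and $\overline{\Phi}\circ\pi$.

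By Hakim~\cite{Ha1} (see also~\cite{Ari}), since the parabolic shadow $\overline{\Phi}$ is a tangent-to-the-identity germ with fully attractive non-degenerate characteristic direction $[v]$, there exists a holomorphic Fatou coordinate $\psi\colon M^{1}_{R,C}\to\C$ satisfying $\psi\circ\overline{\Phi}=\psi+1$. In the $(U,\hat y)=(x^{-k_0},\hat y)$ coordinates from the proof of Theorem~\ref{mainthm}, Hakim's construction yields a gradient bound of the form $|d\psi(u)|\le C|x|^{-k_0-1}$ on the parabolic manifold. I then define
\[
\mu(z):=\lim_{k\to\infty}\bigl[\psi(\pi(F^{\circ k}(z)))-k\bigr].
\]

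The main step is to prove the convergence of this limit. From the proof of Theorem~\ref{mainthm}, we have the decomposition $\pi\circ F(z)=\overline{\Phi}(\pi(z))+g(z)$ with $g(z)=O(\|z\|^{l+1})$. Using $\psi\circ\overline{\Phi}=\psi+1$ together with a first-order Taylor expansion of $\psi$, each telescoping increment equals $d\psi(\xi_k)\cdot g(F^{\circ k}(z))+O(|g(F^{\circ k}(z))|^2)$ for some $\xi_k$ on the segment between $\overline{\Phi}(\pi(F^{\circ k}z))$ and $\pi(F^{\circ(k+1)}z)$. Combining the asymptotic $|x^{(k)}|\sim k^{-1/k_0}$ established in Theorem~\ref{mainthm}, the gradient bound $|d\psi(\xi_k)|\le C|x^{(k)}|^{-k_0-1}\sim Ck^{(k_0+1)/k_0}$, and the decay $|g(F^{\circ k}(z))|\le C|x^{(k)}|^{\beta(l+1)}\sim Ck^{-\beta(l+1)/k_0}$, each increment is bounded by $Ck^{(k_0+1-\beta(l+1))/k_0}$. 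This is summable provided $\beta(l+1)>2k_0+1$, which I arrange by choosing the order $l$ of the Poincar\'e-Dulac normalization sufficiently large (Remark~\ref{PD-limitato}). The corresponding basin $\widetilde B$ produced by Theorem~\ref{mainthm} for such a larger $l$ is possibly smaller than the original $B$, but the Fatou coordinate extends to all of $B$ by setting $\mu(z):=\mu(F^{\circ k}(z))-k$ for any $k$ large enough that $F^{\circ k}(z)\in\widetilde B$, the value being independent of such $k$ thanks to the functional equation.

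Once convergence is established, the relation $\mu\circ F=\mu+1$ is immediate from the definition by shifting the iteration index in the limit, and holomorphy of $\mu$ on $B$ follows from uniform convergence of the defining sequence on compact subsets. The main technical obstacle is the competition between the blow-up of $|d\psi|$ along the orbit approaching $0$ and the decay of the error $g$; this is overcome by exploiting the freedom to take the order $l$ of the Poincar\'e-Dulac normalization arbitrarily large.
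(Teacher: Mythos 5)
Your strategy --- pulling back Hakim's Fatou coordinate $\psi$ of the shadow $\overline{\Phi}$ and renormalizing along the orbit of $F$ --- is genuinely different from the paper's, which never uses $\psi$ at all: there the Fatou coordinate is obtained directly as the limit of $\mu_\ell(z)=\widetilde\Phi_1^{\circ\ell}(\pi(z),z)-\ell-c\log\frac{1}{\widetilde\Phi_1^{\circ\ell}(\pi(z),z)}$, using only the almost-translation form \eqref{Phi} of $\widetilde\Phi_1$ in the chart $U=x^{-k_0}$ and summability of the telescoping increments. Your route is plausible, but as written it relies on two inputs that are asserted rather than proved. The first is the bound $|d\psi(u)|\le C|x|^{-k_0-1}$. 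This is not contained in any result quoted in the paper: Theorem \ref{Ha} only provides the parabolic domains, and the existence of $\psi$ is quoted from \cite{Ha1}, \cite{Ari} without derivative estimates. One expects $\psi\sim x^{-k_0}$ plus lower-order terms, so the bound should follow from the precise asymptotics of $\psi$ together with Cauchy estimates on polydiscs comparable to the distance from $B^1_{R,C}(\eps)$ to $\partial M^1_{R,C}$ (including control of the $y$-derivatives, and on a region containing the whole segment joining $\overline{\Phi}(\pi(F^{\circ k}(z)))$ and $\pi(F^{\circ(k+1)}(z))$, since you apply a mean-value estimate there); but this is the analytic heart of your argument and it has to be carried out, not cited.

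The second gap is the handling of the exponent. The basin $B$ of Theorem \ref{mainthm} is built under the constraints \eqref{betabis}--\eqref{beta}, which give only $\beta l>k_0+1$, not your summability condition $\beta(l+1)>2k_0+1$; so you re-normalize to a higher order $l$ via Remark \ref{PD-limitato}. But that changes the local coordinates, hence the basin: your extension of $\mu$ from the new basin $\widetilde B$ back to the given $B$ by $\mu(z):=\mu(F^{\circ k}(z))-k$ needs the claim that every orbit starting in $B$ eventually enters $\widetilde B$, which you state without argument. This is not immediate: the defining conditions ($|z_j|<|x|^\beta$, the sector condition on $U$, $\|\hat y\|<C$) are coordinate-dependent, and the invariance estimates in the proof of Theorem \ref{mainthm} yield these inequalities along the orbit without a quantified margin, so transporting them through the new tangent-to-identity polynomial normalization requires an extra argument (e.g.\ that $\Re U_\ell\to\infty$, $\arg U_\ell\to 0$, and that $|z_j^{(\ell)}|/|x^{(\ell)}|^\beta$ is eventually bounded away from $1$). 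Until these two points are supplied the proof is incomplete; note that the paper's direct construction needs neither a derivative bound for Hakim's coordinate nor the passage to a different basin.
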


\begin{proof}
Let $B$ be one of the attracting basin of $F$ constructed as in Theorem \ref{mainthm}, and use the same notation as in the proof of Theorem \ref{mainthm}. Denoting by $U_\ell$ the $U$-coordinate of $\pi(F^{\circ\ell}(z))$ for $\ell\ge 0$, it follows from the proof of Theorem \ref{mainthm} that there exists $c\in\C$ such that
\begin{equation}\label{Phi}
U_{\ell+1} = U_\ell + 1 + \frac{c}{U_\ell} + O\left(|U_\ell|^{-2}, |U_\ell|^{-(1+\beta)}\right).
\end{equation}
Consider the sequence $\{\mu_\ell\}$, with $\mu_\ell\colon B\to \C$, defined as
$$
\mu_\ell (z) = U_\ell - \ell - c \log\frac{1}{U_\ell}.
$$
We claim that $\{\mu_\ell\}$ is a Cauchy sequence with respect to the  topology of uniform convergence, and therefore it has a holomorphic limit function~$\mu$. Indeed, we have
$$
\begin{aligned}
\mu_{\ell +1} (z) - \mu_\ell(z)
&= U_{\ell+1} - \ell-1 -c \log U_{\ell +1} - U_\ell + \ell + c \log U_{\ell}\cr
&= U_\ell + 1 +  \frac{c}{U_{\ell}}  - 1- U_\ell - c \log\left(\frac{ U_\ell + 1 +  \frac{c}{U_{\ell}}}{U_\ell}\right)+O\left(|U_\ell|^{-2}, |U_\ell|^{-(1+\beta)}\right)\cr
&= \frac{c}{U_{\ell}} - c\log\left(1 + \frac{1}{U_\ell} + \frac{c}{U_{\ell}^2} \right)+ O\left(|U_\ell|^{-2}, |U_\ell|^{-(1+\beta)}\right).
\end{aligned}
$$
Hence for all $z\in B$, we have
$$
|\mu_{\ell +1} (z) - \mu_\ell(z) | = O\left(|U_\ell|^{-2}, |U_\ell|^{-(1+\beta)}\right),
$$
and therefore
$$
\mu_\ell - \mu_0 = \sum_{j=0}^\ell (\mu_{j+1}-\mu_j)
$$
converges absolutely uniformly on $B$ to a holomorphic limit $\mu-\mu_0$. The limit function $\mu$ semi-conjugates $F$ to a translation. Indeed, we have
$$
\begin{aligned}
\mu(F(z))=\lim_{\ell\to \infty}\mu_\ell (F(z))= \lim_{\ell\to \infty} \left[U_{\ell+1} -\ell - c\log\frac{1}{U_{\ell +1}}\right] = \lim_{\ell\to \infty} \mu_{\ell+1}(z) + 1 = \mu(z) + 1,
\end{aligned}
$$
and we are done.
\end{proof}

\section{Final remarks}\label{final}

\subsection{Example of germs parabolically attracting with respect to different directions}\label{esempio}

Here we construct a family of  $2$-resonant germs in $\C^3$ which are $(f,v_1)$-attracting-non-degenerate and $(f,v_2)$-attracting-non-degenerate (where $f$ is a parabolic shadow  and $v_1, v_2$ are normalized representatives  of two different fully attractive non-degenerate characteristic directions) and which are $(f,v_1)$-parabolically attractive but not  $(f,v_2)$-parabolically attractive.

Let $P^1=(2,3,0)$ and $P^2=(0,2,5)$. Let $\l_1,\l_2,\l_3\in \C^\ast$ be of modulus $1$ such that relations are generated by $\l_1^2\l_2^3=1$ and $\l_2^2\l_3^5=1$. It is easy to see that  $\l_j=\l^L$ for $L\in \N^3$, $|L|\geq 2$, $j=1,2,3$ if and only if $L=k_1P^1+k_2P^2+e_j$ for some $k_1,k_2\in \N$.

Let $F$ be of the form
\begin{equation*}
F_j(z)=\l_j z_j (1+b_{e_1,j}z^{P^1}+b_{e_2,j}z^{P^2})\quad j=1,2,3.
\end{equation*}

Then $F$ is $2$-resonant with respect to $\{\l_1,\l_2,\l_3\}$ and of weighted order $k_0=1$. A parabolic shadow of $F$ is $f(u)= u+H_2(u)$ where
\[
H_2(u)=\left(\begin{array}{c}
				\displaystyle u_1\left[ (2b_{e_1,1} + 3b_{e_1,2})u_1+(2b_{e_2,1} + 3b_{e_2,2})u_2\right]\\
				\displaystyle u_2\left[ (2b_{e_1,2} + 5b_{e_1,3})u_1+(2b_{e_2,2} + 5b_{e_2,3})u_2\right]
				\end{array}\right).
\]
The directions $[1:0]$ and $[0:1]$ are characteristic directions. Imposing
\begin{equation}\label{dima1}
2b_{e_1,1} + 3b_{e_1,2}=-1, \quad 2b_{e_2,2} + 5b_{e_2,3}=-1,
\end{equation}
it follows that the two directions are non-degenerate characteristic directions for $f$. Furthermore setting
\begin{equation}\label{dima2}
2b_{e_2,1} + 3b_{e_2,2}=-p,\quad 2b_{e_1,2} + 5b_{e_1,3}=-q
\end{equation}
with $q,p>1$ it is easy to see that $(1,0)$ and $(0,1)$ are normalized representative of fully attractive non-degenerate characteristic directions for $f$---hence $F$ is $(f,(1,0))$-attracting-non-degenerate and
$(f,(0,1))$-attracting-non-degenerate.

Finally, $F$ is $(f,(1,0))$-parabolically attractive if and only if
\begin{equation}\label{dima3}
\Re b_{e_1,j}<0 \quad j=1,2,3
\end{equation}
whereas $F$ is $(f,(0,1))$-parabolically attractive if and only if
\begin{equation}\label{dima4}
\Re b_{e_2,j}<0 \quad j=1,2,3.
\end{equation}

Given $p,q>1$, for any $b_{e_t,j}$, $t=1,2$, $j=1,2,3$ such that \eqref{dima1}, \eqref{dima2}, \eqref{dima3} are satisfied and $\Re b_{e_2,1}>0$  (such set of solutions is not empty, as it can be easily checked) the corresponding map $F$ is $(f,(1,0))$-parabolically attracting and
$(f,(0,1))$-attracting-non-degenerate but not $(f,(0,1))$-parabolically attracting.

\subsection{Basins of attraction for the inverse of an $m$-resonant germ}\label{inversem}

\begin{proposition}\label{attr-rep}
Let $F\in\Dif$ be $m$-resonant with respect to   $\{\l_1,\ldots,\l_r\}$ and of weighted order $k_0$. Then $F^{-1}$ is $m$-resonant with respect to   $\{\l_1^{-1},\ldots,\l_r^{-1}\}$ and of weighted order $k_0$. Moreover, if $F$ is attracting-non-degenerate (resp. parabolically attracting) with respect to   $\{\l_1,\ldots,\l_r\}$ then $F^{-1}$ is
attracting-non-degenerate (resp. parabolically attracting) with respect to   $\{\l_1^{-1},\ldots,\l_r^{-1}\}$.
\end{proposition}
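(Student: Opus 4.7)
The plan is to start from a Poincar\'e-Dulac normal form $\tilde F$ of $F$ and compute explicitly the lowest-order coefficients of $\tilde F^{-1}$. The resonance structure is immediate: $\l_j=\l^L$ if and only if $\l_j^{-1}=(\l^{-1})^L$, so the multi-indices appearing in resonances for $\{\l_j^{-1}\}_{j=1}^r$ coincide with those for $\{\l_j\}_{j=1}^r$. Hence $F^{-1}$ is $m$-resonant with respect to $\{\l_j^{-1}\}_{j=1}^r$ with the very same ordered generators $P^{1},\ldots,P^{m}$.

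Next I would compute the normal form of $F^{-1}$. Up to any prescribed order (using Remark \ref{PD-limitato}), write
$$\tilde F_j(z)=\l_j z_j+\sum_{|K|\ge1}a_{K,j}z^{K{\bf P}}z_j,\qquad 1\le j\le r.$$
Since the multi-resonance condition on multi-indices is additive, the set of Poincar\'e-Dulac normal forms with the given diagonal linear part is closed under composition and inversion, so $\tilde F^{-1}$ is itself in Poincar\'e-Dulac normal form modulo an arbitrarily high-order remainder, and we may write
$$\tilde F^{-1}_j(z)=\l_j^{-1}z_j+\sum_{|K|\ge 1}b_{K,j}z^{K{\bf P}}z_j,\qquad 1\le j\le r.$$
Setting $w=\tilde F^{-1}(z)$ and expanding $\tilde F(w)=z$, and using that $\l^{P^t}=1$ for each $t$ (so $w^{K{\bf P}}=z^{K{\bf P}}+$ higher-order terms in $z^{{\bf P}}$), matching the coefficient of $z^{K{\bf P}}z_j$ at the lowest weight $|K|=k_0$ yields $\l_j b_{K,j}+a_{K,j}\l_j^{-1}=0$, that is, $b_{K,j}=-a_{K,j}/\l_j^2$. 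Since $a_{K,j}\ne 0$ for some $|K|=k_0$ and $1\le j\le r$, the same holds for $b_{K,j}$, so the weighted order of $F^{-1}$ is also $k_0$.

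Substituting $b_{K,j}=-a_{K,j}/\l_j^2$ into formula \eqref{eqmatrixhakim} applied with eigenvalues $\l_j^{-1}$, each coefficient $b_{K,j}/\l_j^{-1}$ equals $-a_{K,j}/\l_j$, so the parabolic shadow $\hat f(u)=u+\hat H_{k_0+1}(u)$ of $F^{-1}$ satisfies $\hat H_{k_0+1}=-H_{k_0+1}$, where $H_{k_0+1}$ comes from a parabolic shadow $f$ of $F$. Consequently the induced rational self-maps of $\C\P^{m-1}$ agree, so the characteristic directions of $f$ and $\hat f$, their (non-)degeneracy status, and the matrices $A(v)$ defining their directors are identical. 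In particular $[v]$ is a fully attractive non-degenerate characteristic direction of $f$ if and only if it is one for $\hat f$. The normalization, however, changes: if $H_{k_0+1}(v)=-v/k_0$ and we set $\hat v=\zeta v$ with $\zeta^{k_0}=-1$, then
$$\hat H_{k_0+1}(\hat v)=-\zeta^{k_0+1}H_{k_0+1}(v)=\zeta^{k_0+1}\frac{v}{k_0}=\zeta^{k_0}\frac{\hat v}{k_0}=-\frac{\hat v}{k_0},$$
so $\hat v$ is a normalized representative of $[v]$ for $\hat f$. This proves the attracting-non-degenerate assertion.

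For the parabolic attracting statement, using $\hat v^{K}=\zeta^{|K|}v^{K}=-v^{K}$ for $|K|=k_0$, I compute
$$\sum_{|K|=k_0}\frac{b_{K,j}}{\l_j^{-1}}\hat v^{K}=\sum_{|K|=k_0}\Big(-\frac{a_{K,j}}{\l_j}\Big)(-v^{K})=\sum_{|K|=k_0}\frac{a_{K,j}}{\l_j}v^{K}.$$
Thus the real part is negative for every $1\le j\le r$ precisely when \eqref{parabolic} holds for $F$, which gives that $F^{-1}$ is $(\hat f,\hat v)$-parabolically attracting. The one real bookkeeping subtlety, and the step I expect to require the most care, is that two sign changes cancel: the minus in $b_{K,j}=-a_{K,j}/\l_j^2$ and the minus produced by the rescaling $\zeta^{k_0}=-1$ needed to re-normalize the characteristic direction. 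Their combined effect is exactly what recovers \eqref{parabolic} verbatim for $F^{-1}$.
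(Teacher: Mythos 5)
Your proof is correct and follows essentially the same route as the paper: compute the lowest-weight resonant coefficients of the inverse, $b_{K,j}=-a_{K,j}/\l_j^2$, observe that the parabolic shadow of $F^{-1}$ is $u\mapsto u-H_{k_0+1}(u)$, renormalize the characteristic direction by a $k_0$-th root of $-1$, and check that the two sign changes cancel in \eqref{parabolic}. Your extra remarks (the resonance identity $\l_j=\l^L\iff\l_j^{-1}=(\l^{-1})^L$ and the closure of normal forms under inversion) only make explicit steps the paper leaves implicit.
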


\begin{proof}
The eigenvalues of the linear part of the germ $F^{-1}$ are  $\{\l_1^{-1},\ldots,\l_n^{-1}\}$.
By Remark \ref{PD-limitato} we can choose local holomorphic coordinates such that
$$
F_j(z) = \lambda_j z_j + \sum_{k_0\le | K|\le k_l \atop K\in\N^m}  a_{K,j} z^{K {\bf P}} z_j + O\left(\|z\|^{l+1}\right), \quad j=1,\ldots, r.
$$
Therefore
$$
F_j^{-1}(z) = \lambda_j^{-1} z_j  - \sum_{|K|=k_0} \frac{a_{K, j}}{\l_j^{2}} z^{K {\bf P}} z_j + \cdots, \quad j=1,\ldots, r.
$$
Hence $F^{-1}$   is $m$-resonant with respect to    $\{\l_1^{-1},\ldots,\l_r^{-1}\}$ and of weighted order $k_0$.

Moreover, assume that $F$ is $(f,v)$-attracting-non-degenerate with respect to some parabolic shadow $f(u)=u+H_{k_0 +1}(u)$ of $F$ and a normalized representative $v$ of a fully attractive non-degenerate characteristic direction for $f$. The corresponding parabolic shadow of $F^{-1}$ is $\2f: u\mapsto u-H_{k_0 +1}(u)$. Therefore, for any $\zeta\in\C$ so that $\zeta^{k_0} = -1$, we have
$$
-H_{k_0 +1}(\zeta v) = -\frac{1}{k_0}(\zeta v).
$$
Since the map induced by $-H_{k_0 +1}$ in $\C\P^{m-1}$ is the same as the one induced by $H_{k_0 +1}$, the matrix $A(\zeta v)$ has all eigenvalues with positive real parts also in this case. Hence $F^{-1}$ is $(\2f,\zeta v)$-attracting-non-degenerate.

Moreover, if $F$ is $(f,v)$-parabolically attracting then
$$
\Re\Bigg(\sum_{|K|=k_0\atop K\in\N^m} \frac{-a_{K, j}/\l_j^2}{\l_j^{-1}} (\zeta v)^{K} \Bigg)= \Re \Bigg(\sum_{|K|=k_0\atop K\in\N^m} \frac{-a_{K, j}}{\l_j} \zeta^{k_0} v^{K} \Bigg) = \Re \Bigg(\sum_{|K|=k_0\atop K\in\N^m} \frac{a_{K, j}}{\l_j} v^{K} \Bigg)< 0.
$$
Hence $F^{-1}$ is $(\2f,\zeta v)$-parabolically attracting.
\end{proof}

If $F\in\Dif$, as customary a basin of attraction for $F^{-1}$ is called a {\sl repelling basin for $F$}. From Proposition \ref{attr-rep} and Theorem \ref{mainthm} we thus have the following corollary:

\begin{corollary}
Let $F\in \Dif$ be  $m$-resonant with respect to all eigenvalues  $\{\l_1,\ldots,\l_n\}$ and of weighted order $k_0$. Assume that $|\l_j|=1$ for all $j=1,\ldots, n$. If $F$ is parabolically attracting  then  there exist (at least) $k_0$ repelling basins for $F$ having $0$ on the boundary.
\end{corollary}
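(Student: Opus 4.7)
The plan is to apply Theorem \ref{mainthm} to the inverse map $F^{-1}$ and then reinterpret the resulting attracting basins as repelling basins for $F$. The heavy lifting has essentially been done already in Proposition \ref{attr-rep}, so the argument is a matter of verifying that the hypotheses of Theorem \ref{mainthm} transfer correctly.

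First, by Proposition \ref{attr-rep}, the assumption that $F$ is $m$-resonant with respect to all eigenvalues $\{\l_1,\ldots,\l_n\}$, of weighted order $k_0$, and parabolically attracting implies that $F^{-1}$ is $m$-resonant with respect to $\{\l_1^{-1},\ldots,\l_n^{-1}\}$, is still of weighted order $k_0$, and is also parabolically attracting. This takes care of the algebraic and dynamical invariants.

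Second, one checks the spectral condition: the eigenvalues of the linear part of $F^{-1}$ at $0$ are $\{\l_1^{-1},\ldots,\l_n^{-1}\}$, and since $|\l_j|=1$ for all $j=1,\ldots,n$ by hypothesis, we also have $|\l_j^{-1}|=1$ for all $j$. Hence the assumptions of Theorem \ref{mainthm} are satisfied by $F^{-1}$ in the extreme case $r=n$, where the auxiliary condition $|\l_j|<1$ for $j=r+1,\ldots,n$ is vacuously true. Applying Theorem \ref{mainthm} to $F^{-1}$ therefore produces at least $k_0$ disjoint attracting basins for $F^{-1}$ with $0$ on their boundary. By the very definition given in Section \ref{inversem}, an attracting basin for $F^{-1}$ is a repelling basin for $F$, so these yield $k_0$ disjoint repelling basins for $F$ having $0$ on the boundary, proving the corollary.

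The proof has no genuine obstacle; the only point that requires a second look is making sure the hypotheses of Theorem \ref{mainthm} fit the situation when all eigenvalues lie on the unit circle, which is precisely why the theorem is stated in a way that allows $r=n$.
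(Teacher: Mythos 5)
Your proposal is correct and is exactly the paper's argument: the corollary is stated as an immediate consequence of Proposition \ref{attr-rep} (transferring $m$-resonance, weighted order, and parabolic attractiveness to $F^{-1}$) combined with Theorem \ref{mainthm} applied to $F^{-1}$ in the case $r=n$, where attracting basins of $F^{-1}$ are by definition repelling basins of $F$. Your explicit check that the modulus-one hypothesis passes to the inverse eigenvalues and that the condition on the remaining eigenvalues is vacuous is precisely what the paper leaves implicit.
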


\subsection{Leau-Fatou flower theorem for  one-resonant Poincar\'e-Dulac normal form }

Let $G\in\Dif$ be one-resonant with respect to all eigenvalues $\{\l_1,\ldots,\l_n\}$, with $|\l_j|=1$, $j=1,\ldots, n$, and assume it is in Poincar\'e-Dulac normal form \eqref{P-Dulac}. Assume $G$ is non-degenerate, parabolically attracting (cfr. Remark \ref{1-nota-fully-attractive}), with order $k_0\ge 1$ and generator $\a\in \N^n$. Let $\pi:\C^n\to \C$ be the projection given by
$(z_1,\ldots, z_n)\mapsto z^\a$. Let $u=z^\a=\pi(z)$. Set
\begin{equation}\label{phi-u}
\Phi(u):=G_1(z)^{\a_1}\cdots G_n(z)^{\a_n}=u+\Lambda(G) u^{k_0+1} +O(|u|^{k_0+2}).
\end{equation}
 Note that $\Phi:(\C,0)\to (\C,0)$ is
tangent to the identity and $\pi \circ G=\Phi \circ \pi$.  Let $v^+_1,\ldots, v^+_{k_0}$ be the attracting directions for $\Phi$,
and let $P^+_j\subset\C$, $j=1,\ldots,k_0,$ be the attracting
petal centered at $v^+_j$. Arguing as in the proof of \cite[Proposition 4.2]{BZ}, since $\Phi$ has no terms depending on $z$, it is not difficult to show that the sets
\[
U^+_j:=\pi^{-1}(P^+_j)\subset\C^{n},
\]
are basins of attraction for $G$.

On the other hand, by (the proof of) Proposition \ref{attr-rep}, if $v^-_1,\ldots, v^-_{k_0}$ are the repelling directions for $\Phi$, and $P^-_j\subset\C$, $j=1,\ldots,k_0,$ is the repelling
petal centered at $v^-_j$, the sets
\[
U^-_j:=\pi^{-1}(P^-_j)\subset\C^{n}
\]
are repelling basins of  $G$.  By the very construction, the union of the $U^+_j$'s and the $U^-_j$'s and $\displaystyle\bigcup_{j=1\atop \a_j\ne 0}^n\{z_j=0\}$ covers a full neighborhood of the origin.

Since on $\{z_j = 0\}$ for $\a_j\neq 0$ the map $G$ is linear, this provides a complete dynamical picture of $G$ in a full neighborhood of $0$. Hence we have the following generalization of the Leau-Fatou flower theorem:

\begin{theorem}\label{L-Fatou}
Let $F\in\Dif$ be one-resonant with respect to all eigenvalues $\{\l_1,\ldots,\l_n\}$ with generator $\a\in \N^n$. Assume that $F$ is holomorphically conjugated to one of its Poincar\'e-Dulac normal forms. Suppose that $|\l_j|=1$, $j=1,\ldots, n$ and  $F$ is parabolically attracting. Then for each $j\in \{1,\ldots, n\}$ such that $\a_j\neq 0$ there exists a germ $M_j$ of a complex manifold  tangent to $\{z_j=0\}$ at $0$ such that $F(M_j)\subset M_j$ and $F|_{M_j}$ is holomorphically linearizable. Moreover, there exists  an open neighborhood $W$ of $0$ such that $W\setminus \bigcup_j M_j$ is the union of  attracting and repelling basins of~$F$.
\end{theorem}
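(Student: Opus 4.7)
The plan is to reduce everything to a Poincar\'e-Dulac normal form $G$ and then transport the invariant manifolds and basins back to $F$ via the conjugation. Let $\psi$ be the holomorphic conjugation between $F$ and such a $G$, which may be taken with $d\psi_0=\id$ since the diagonal part of $F$ can be absorbed beforehand; thus $F=\psi\circ G\circ\psi^{-1}$ with $G$ of the shape \eqref{P-Dulac}.

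First, I would observe that in the one-resonant normal form every resonant monomial in the $s$-th component of $G$ is of the shape $a_{k,s}\,z^{k\a}z_{s}$ with $k\ge 1$, so whenever $\a_j\neq 0$ the variable $z_j$ divides each such term. Consequently the coordinate hyperplane $\{z_j=0\}$ is $G$-invariant and $G|_{\{z_j=0\}}$ coincides with its diagonal linear part. Setting $M_j:=\psi(\{z_j=0\})$ therefore produces an $F$-invariant germ of complex manifold, tangent to $\{z_j=0\}$ at $0$ because $d\psi_0=\id$, on which $F$ is holomorphically linearizable (being holomorphically conjugate to a diagonal linear map). This yields the manifolds $M_j$ claimed in the statement.

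The attracting and repelling basins are essentially already constructed in the paragraphs preceding the statement: from the attracting and repelling petals $P^{\pm}_i\subset\C$ of the one-dimensional map $\Phi$ in \eqref{phi-u}, the sets $U^{\pm}_i:=\pi^{-1}(P^{\pm}_i)\subset\C^n$ are identified as attracting and repelling basins of $G$ (via Theorem~\ref{mainthm} and Proposition~\ref{attr-rep}, using that $\Phi$ depends only on $u=z^\a$). The images $\widetilde U^{\pm}_i:=\psi(U^{\pm}_i)$ are then the corresponding basins for $F$.

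The only genuinely new step is verifying that the $M_j$'s together with the $\widetilde U^{\pm}_i$'s exhaust a full neighborhood of $0$. Work in the $G$-coordinates, and let $z$ lie in a sufficiently small neighborhood of $0$: either $z_j=0$ for some $j$ with $\a_j\neq 0$, in which case $z\in\{z_j=0\}=\psi^{-1}(M_j)$; or else $u=\pi(z)=z^{\a}$ is a nonzero complex number close to $0$, and the classical Leau-Fatou flower theorem (Theorem~\ref{LF}) applied to $\Phi$ places $u$ in one of the petals $P^{\pm}_i$, hence $z\in U^{+}_i\cup U^{-}_i$. Applying $\psi$ yields the desired covering of a neighborhood of $0$ by the $M_j$'s and the $\widetilde U^{\pm}_i$'s. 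I do not expect a serious analytic obstacle beyond this combinatorial bookkeeping: the hard work (existence of the basins for $G$ and for $G^{-1}$) is already contained in Theorem~\ref{mainthm} and Proposition~\ref{attr-rep}, the linearizability on $M_j$ is trivial from the normal form, and the covering step is a direct consequence of the one-dimensional Leau-Fatou theorem for $\Phi$.
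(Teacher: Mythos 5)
Your overall route is the same as the paper's: take a holomorphic conjugation $\psi$ to a Poincar\'e--Dulac normal form $G$ (with $d\psi_0$ diagonal, by Lemma~\ref{diff}, so tangency to the coordinate hyperplanes is preserved), set $M_j=\psi(\{z_j=0\})$ where $G$ is linear, pull back the attracting and repelling petals of the one-dimensional map $\Phi$ of \eqref{phi-u} under $\pi(z)=z^{\alpha}$, and obtain the covering of a punctured neighborhood from the classical Leau--Fatou Theorem~\ref{LF} applied to $\Phi$. All of that is exactly what the paper does.

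The one step that does not hold as you justify it is the identification of $U^{\pm}_i=\pi^{-1}(P^{\pm}_i)$ as basins ``via Theorem~\ref{mainthm} and Proposition~\ref{attr-rep}''. The basins produced by Theorem~\ref{mainthm} are of the form $\{\,|z_j|<|x|^{\beta},\ \pi(z)\in B^1_{R,C}(\eps)\,\}$: they are thin in the fibers of $\pi$ and are nowhere near the full preimage of a petal. With those basins alone the final covering claim fails, since a point with $z^{\alpha}$ tiny but some coordinate $|z_j|$ of fixed size lies in $\pi^{-1}(P^{\pm}_i)$ but in no basin coming from Theorem~\ref{mainthm}. What is actually needed---and what the paper invokes, arguing as in \cite[Proposition 4.2]{BZ}---is a direct estimate using that $G$ is \emph{exactly} in normal form \eqref{P-Dulac}: each component satisfies
$$
|G_j(z)|=|z_j|\,\Bigl|1+\sum_{k\ge k_0}\tfrac{a_{k,j}}{\lambda_j}\,u^{k}\Bigr|,\qquad u=z^{\alpha},
$$
so along an orbit whose $u$-component stays in an attracting petal one gets, from the parabolically attracting condition \eqref{parabolic} together with Theorem~\ref{LF}(1)--(2), a bound $\bigl|1+\cdots\bigr|\le 1-c\,|u_\ell|^{k_0}$ for $\ell$ large, with $\sum_\ell |u_\ell|^{k_0}=\infty$; hence the product of these factors tends to $0$ and \emph{every} point of $\pi^{-1}(P^{+}_i)$ near $0$ is attracted to the origin (and likewise for $G^{-1}$, using Proposition~\ref{attr-rep} to see that it is again parabolically attracting, which handles the repelling petals). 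Your parenthetical ``using that $\Phi$ depends only on $u$'' points at the right structural fact, but this argument---not Theorem~\ref{mainthm}---is what makes the full preimages basins; once it is supplied, the remainder of your proof coincides with the paper's.
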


\bibliographystyle{alpha}

\begin{thebibliography}{ABT2}
\bibitem{Ab} M.  Abate, {\sl The residual index and the
dynamics of holomorphic maps tangent to the identity}.  Duke
Math. J. 107, 1, (2001), 173-207.
\bibitem{Ab2} M. Abate, {\sl Discrete local holomorphic dynamics}. In ``Holomorphic dynamical systems'', Eds. G. Gentili, J. Guenot, G. Patrizio, Lect. Notes in Math. 1998, Springer, Berlin, (2010), 1--55.
\bibitem{AT} M. Abate, F. Tovena, {\sl Parabolic curves in $\C^3$}.  Abstr.
Appl. Anal. {\bf 2003} (2003), 275--294.
\bibitem{AT2} M. Abate, F. Tovena, {\sl Poincar\'e-Bendixson theorems for meromorphic connections and holomorphic homogeneous vector fields}. J. Diff. Eq. {\bf 251}, (2011), 2612-–2684.
\bibitem{ABT} M. Abate, F. Bracci, F. Tovena, {\sl Index theorems for
holomorphic self-maps}.  Ann. of Math. 159, 2, (2004), 819-864.
\bibitem{Ari} M. Arizzi, J. Raissy, {\sl On \'Ecalle-Hakim's theorems in holomorphic dynamics}, to appear in Proceedings of ``Frontiers in Complex Dynamics'' 2011, arXiv:1106.1710v1.
\bibitem{Ar} V. I. Arnold, {\sl Geometrical methods in the theory of
ordinary differential equations}. Springer, 1983.
\bibitem{Br} F. Bracci, {\sl Local holomorphic dynamics of diffeomorphisms in dimension one}. Five lectures in complex
analysis, 1--42, Contemp. Math., 525, Amer. Math. Soc.,
Providence, RI, 2010
\bibitem{BZ} F. Bracci, D. Zaitsev, {\sl Dynamics of one-resonant biholomorphisms}.  J. Eur. Math. Soc., to
appear, arXiv:0912.0428v2.
%\bibitem{Br} F. Bracci, {\sl Local dynamics of holomorphic diffeomorphisms}. Boll. UMI (8) 7-B (2004), 609-636.
\bibitem{B-M} F. Bracci\and  L. Molino, {\sl The dynamics near
quasi-parabolic fixed points of holomorphic diffeomorphisms in
$\C^2$}. Amer. J. Math. 126 (2004), 671--686.
%\bibitem{BCL} F. Brochero Martinez, F. Cano, L. Lopez Hernanz, {\sl Parabolic
%curves for diffeomorphisms in $\mathbb C^2$}. Publ. Math. 52
%(2008), 189-194.
%\bibitem{CS} C. Camacho, P. Sad, {\sl Invariant varieties through singularities of holomorphic vector fields}. Ann. of
%Math. 115 (1982), 579-595.
%\bibitem{CaGa} L. Carleson, T. W. Gamelin, {\sl Complex dynamics}.
%Springer, 1993.
\bibitem{Ec} J. \'Ecalle, {\sl Les fonctions r\'esurgentes, Tome
III: L'\'equation du pont et la classification analytiques des
objects locaux}. Publ. Math. Orsay, 85-5, Universit\'e de
Paris-Sud, Orsay, 1985.
\bibitem{Fa} P. Fatou, {\sl Substitutions analytiques et
equations fonctionelles de deux variables}. Ann. Sc. Ec. Norm.
Sup. (1924), 67--142.
%\bibitem{G1} D.M. Grobman, {\sl Homeomorphism of systems of differential equations}. Dokl. Akad. Nauk. USSR
%128 (1959), 880-881.
%\bibitem{G2} D.M. Grobman, {\sl Topological
%classification of neighbourhoods of a singularity in n-space}.
%Math. Sbornik 56 (1962), 77-94.
%
%\bibitem{I} F. Ichikawa, {\sl On finite determinacy of formal vector fields.}
%Invent. Math. 70 (1982/83), no. 1, 45--52.
%
%\bibitem{IY} Y. Ilyashenko, S. Yakovenko, {\sl Lectures on analytic differential equations}. Graduate Studies in Mathematics, 86. American Mathematical Society, Providence, RI, 2008.
\bibitem{H} M. Hakim, {\sl Attracting domains for
semi-attractive transformations of $\C^p$}. Publ. Math. 38,
(1994), 479--499.
\bibitem{Ha} M. Hakim, {\sl Analytic transformations of $(\C^p,0)$ tangent to the identity}, Duke Math. J. 92 (1998), 403--428.
\bibitem{Ha1} M. Hakim, {\sl Transformations tangent to the identity. Stable pieces of manifolds}, Preprint, 1998.
%\bibitem{Har} P. Hartman, {\sl A lemma in the theory of structural stability of differential equations}. Proc. Am.
%Math. Soc. 11 (1960), 610-620.
%\bibitem{Po} J. P\"oschel, {\sl On invariant manifolds of complex
%analytic mappings near fixed points}. Exp. Math. 4 (1986),
%97-109.
%\bibitem{N} Y. Nishimura, {\sl Automorphisms analytiques admettant
%des sous-vari\'et\'es de points fix\'es attractives dans la
%direction transversale} J. Math. Kyoto Univ. 23-2 (1983),
%289-299.
\bibitem{Ra} J. Raissy, {\sl Torus actions in the normalization problem}, Jour. Geom. Anal., {\bf 20}, (2010), 472--524.
\bibitem{Ri} M. Rivi, {\sl Parabolic manifolds for semi-attractive
holomorphic germs}. Michigan Math. J. 49, 2, (2001), 211--241.
\bibitem{R1} F. Rong, {\sl Quasi-parabolic analytic transformations of
$\mathbb C^n$.} J. Math. Anal. Appl. 343, No. 1, (2008), 99--109.
\bibitem{R2} F. Rong, {\sl Linearization of holomorphic germs with quasi-parabolic fixed
points.} Ergodic Theory Dyn. Syst. 28, No. 3, (2008), 979--986.
%\bibitem{St} L. Stolovitch, {\sl Classification analytique de
%champs de vecteurs $1$-r\'esonnants de $(\C^n, 0)$}. Asymptotic
%Analysis 12 (1996), 91-143.
%\bibitem{Tak} F. Takens, {\sl Forced oscillations and bifurcations}, Global analysis of dynamical systems,
%Inst. Phys., Bristol, 2001, Reprint from Comm. Math. Inst. Rijksuniv.
%Utrecht, No. 3-1974, 1974, pp. 1-61.
%
\bibitem{Ue1} T. Ueda, {\sl Local structure of analytic
transformations of two complex variables, I}. J. Math. Kyoto
Univ., 26-2 (1986), 233--261.
\bibitem{Ue2} T. Ueda, {\sl Local structure of analytic
transformations of two complex variables, II}. J. Math. Kyoto
Univ., 31-3 (1991), 695--711.
\bibitem{V} L. R. Vivas: {\sl Fatou-Bieberbach Domains as Basins of Attraction of Automorphisms Tangent to the Identity}, Jour. Geom. Anal. {\bf 22}, (2012), 352--382.

%\bibitem{Y} J.-C. Yoccoz: {\sl Th\'eor\'eme de Siegel, nombres de Bryuno et polynomes quadratiques}. Ast\'erisque 231
%(1995), 3√±88.
%\bibitem{We} B. J. Weickert, {\sl Attracting basins for
%automorphisms of $\C^2$}. Invent. Math. 132 (1998), 581-605.
\end{thebibliography}

\end{document}